\documentclass[12pt,reqno]{amsart}

\usepackage{amsmath, amsfonts, amsthm, amssymb, color,  graphicx, mathrsfs, cite}
\usepackage{stmaryrd}

\textwidth=15.0cm \textheight=21.0cm \hoffset=-1.2cm \voffset=-0.5cm

\usepackage{color}

\theoremstyle{plain}
\newtheorem{Theorem}{Theorem}[section]
\newtheorem{Lemma}{Lemma}[section]
\newtheorem{Proposition}{Proposition}[section]

\theoremstyle{Definition}

\theoremstyle{Remark}
\newtheorem{Remark}{Remark}[section]

\numberwithin{equation}{section}
\allowdisplaybreaks

\usepackage{ifpdf}
\ifpdf \usepackage[colorlinks=true, citecolor=blue, linkcolor=blue, urlcolor=blue]{hyperref} \fi

\begin{document}

\title[ Large Time Behavior  of
Compressible Fluid without Viscosity]
{ Asymptotic stability of viscous contact wave and rarefaction waves for the system of
heat-conductive ideal gas without viscosity}

\author{ Lili Fan}
\address{Lili Fan: School of Mathematics and Computer Science, Wuhan Polytechnic University, Wuhan 430023, P. R. China}
\email{fll810@live.cn}

\author{ Guiqiong Gong}
\address{ Guiqiong Gong:
School of Mathematics and Statistics, Wuhan University, Wuhan 430072, P. R. China}
\email{gongguiqiong@yeah.net}

\author{Shaojun Tang*}\thanks{*Corresponding author.
Email address: shaojun.tang@whu.edu.cn~(S. J. Tang)}
\address{Shaojun Tang:
 School of Mathematics and Statistics, Wuhan University, Wuhan 430072, P. R. China}
\email{shaojun.tang@whu.edu.cn}

\date{} \maketitle

This paper is concerned with the Cauchy problem of heat-conductive ideal gas without viscosity. We show that, for the non-viscous case, if the strengths of the wave patterns and the initial perturbation are suitably small, the unique global-in-time solution exists and asymptotically tends toward the corresponding the viscous contact wave or the composition of a viscous contact wave with rarefaction waves determined by the initial condition, which extended the results by Huang-Li-Matsumura \cite{Huang-Li-Matsumura-ARMA-2010}, where they treated the viscous and heat-conductive ideal gas.\\

{\bf{Keywords:}} Non-viscous; asymptotic behavior; viscous contact wave; rarefaction waves. \\

{\bf{Mathematics Subject Classification 2010:}} 35Q35, 35B35, 35L65

\tableofcontents

\section{Introduction}
\setcounter{equation}{0}

 We consider the Cauchy problem for the equation of heat-conductive ideal gas without viscosity
\begin{eqnarray}\label{1.1}
\left\{
\begin{array}{ll}
  v_t - u_x = 0, \\[2mm]
  u_t + p_x = 0, \\[2mm]
  ( e + \frac{u^{2}}{2} )_{t} + ( p u )_x = \kappa \big( \frac{\theta_x}{v} \big)_x\, , \quad \quad x \in {\mathbb{R}}, \ t>0\,
\end{array}
\right.
\end{eqnarray}
with the following prescribed initial data and the far field state
\begin{eqnarray}\label{1.3}
\left\{
\begin{array}{l}
  (v, u, \theta)(x,0) = (v_0, u_0, \theta_0)(x), \quad x \in {\mathbb{R}} \,, \\[2mm]
  (v, u, \theta)(\pm \infty, t) = (v_\pm, u_\pm, \theta_\pm), \quad t > 0 \,.
\end{array}
\right.
\end{eqnarray}
Here $v(x,t)>0$, $u(x,t)$, $\theta(x,t)>0$, $e(x,t)>0$ and $p(x,t)$ are the specific volume, fluid velocity, internal energy, absolute temperature and pressure of the gas, respectively, while $\kappa>0$ denotes the heat conduction coefficient. $v_\pm (>0)$, $\theta_\pm (>0)$ and $u_\pm (\in{\mathbb{R}})$ are given constants and the initial data $(v_0(x), u_0(x),\theta_0(x))$ are assumed to satisfy $\inf \limits_{x \in {\mathbb{R}} } v_0(x) > 0$, $\inf \limits_{x \in {\mathbb{R}} } \theta_0(x) > 0$ and the compatibility conditions $(v_0, u_0, \theta_0) (\pm \infty) = (v_\pm, u_\pm, \theta_\pm)$.

Throughout this paper, we are concerned with the ideal and polytropic fluids and in such a case, $p$ and $e$ are given by the following state equations
\begin{equation}\label{1.2}
  p = \frac{R\theta}{v} = A v^{-\gamma} e^{\frac{\gamma-1}{R} s} \, ,\quad
  e = \frac{R}{\gamma-1} \theta + const. \, ,
\end{equation}
where $s$ is the entropy, $\gamma>1$ is the adiabatic exponent and both $A$ and $R$ are  positive constants.

If $\kappa=0$, we can rewrite the system (\ref{1.1}) as
\begin{eqnarray}\label{1.4}
\left\{
\begin{array}{ll}
  v_t - u_x = 0, \\[2mm]
  u_t + p_x = 0, \\[2mm]
  ( e + \frac{u^{2}}{2} )_{t} + ( p u )_x =0.
\end{array}
\right.
\end{eqnarray}
It is well-known that the above system has the three eigenvalues:
$\lambda_1=-\sqrt{\gamma p/v}<0,\ \lambda_2=0,\ \lambda_3=-\lambda_1>0$,
where the second characteristic field is linear degenerate and the others are genuinely nonlinear. The solutions  of this inviscous equations with the Riemann initial data
\begin{eqnarray}\label{1.5}
  (v, u, \theta)(x,0)=
\left\{
\begin{array}{l}
  (v_-, u_-, \theta_-), x<0\, , \\[2mm]
  (v_+, u_+, \theta_+), x>0\, .
\end{array}
\right.
\end{eqnarray}
has the basic Riemann solutions which are dilation invariant: shock wave, rarefaction wave, contact discontinuity, and certain linear combinations of these basic wave patterns \cite{Smoller-Sringer-1994}. The inviscid system (\ref{1.4}) is a typical example of hyperbolic conservation laws and is of great importance to study the large-time asymptotic behavior of solutions of the corresponding viscous system (\ref{1.1}). In fact, if the unique global entropy to its Riemann problem \eqref{1.4}, \eqref{1.5} consists of shock wave, rarefaction wave, contact discontinuity, and/or their linear superpositions, then the large time behavior of the Cauchy problem \eqref{1.1}, \eqref{1.3} of the corresponding viscous conservation laws is expected to be precisely described by the corresponding viscous shock wave, rarefaction wave, viscous contact discontinuity and/or their linear superpositions.

The rigorous mathematical justifications of the above expectation is one of the hottest topics in the field of nonlinear partial differential equations, especially in the field of nonlinear dissipative hyperbolic conservation laws. In fact, new phenomena have been discovered and new techniques, such as the weighted characteristic energy method and the approximate Green function method have been developed based on the intrinsic properties of the underlying nonlinear waves, cf. \cite{Huang-Li-Matsumura-ARMA-2010, Liu-MAMA-1985, Liu-CPAM-1986, Liu-Zeng-MAMS-1997, Liu-Zeng-CMP-2009, Szepessy-Xin-ARMA-1993, XinZhouping-WS-1996} and the references cited therein.

By combining these methods together with the fundamental energy method, some interesting results have been obtained for the compressible Navier-Stokes equations:

\begin{itemize}
  \item For the case when both viscosity coefficient $\mu$ and the heat conductivity coefficient $\kappa$ are positive constants, many excellent results have been obtained for the nonlinear stability of some basic wave patterns consisting of diffusion waves, viscous shock waves, rarefaction waves, viscous contact discontinuities and/or their certain linear superpositions with small perturbation, cf. \cite{Liu-Zeng-MAMS-1997} for diffusion waves,  \cite{Liu-CPAM-1986, Liu-MAMA-1985, Kawashima-Matsumura-CMP-1985, Matsumura-Nishihara-JJAM-1985} for viscous shock profiles, \cite{Liu-Xin-CMP-1988, Matsumura-Nishihara-JJAM-1985} for rarefaction waves, \cite{Huang-Matsumura-Shi-OJM-2004, Huang-Matsumura-Xin-ARMA-2005, Huang-Xin-Yang-AM-2008} for viscous contact discontinuities, \cite{Huang-Matsumura-CMP-2009} for the composition of viscous shock profiles of different family, \cite{Huang-Li-Matsumura-ARMA-2010} for the  composition a viscous contact wave and rarefaction waves. For the corresponding results with large initial perturbation, see \cite{Duan-Liu-Zhao-TransAMS-2009, F-L-W-Zhao-JDE-2014, Huang-Wang-IUMJ-2016, Huang-Zhao-RSMUP-2003, Matsumura-Nishihara-CMP-1992, Matsumura-Nishihara-QAM-2000, Nishihara-Yang-Zhao_SIAMJMA_2004, Wan-Wang-Zhao-JDE-2016, Wang-Zhao-Zou-KRM-2013} and the reference cited therein.

  \item For the case when $\mu > 0$, $\kappa = 0$, Liu and Zeng \cite{Liu-Zeng-JDE-1999} studied the large-time behavior of solutions around a constant state for this case. For the three-dimensional case, the global existence and the temporal  convergence rate of solutions was obtained by Duan and Ma \cite{Duan-Ma-Indiana-2008}.
  \item When $\mu = 0$, $\kappa >0$, if the corresponding Riemann solution consists of two shock waves form different families, the nonlinear stability of the composition of viscous shock waves from different families was investigated by Fan and Matsumura in \cite{Fan-Matsumura-JDE-2015}, while the nonlinear stability of viscous contact waves together with the temporal decay rates was obtained by Ma and Wang in \cite{Ma-Wang-JMP-2016}. In all these results, both the strengths of the underlying wave patterns and the initial perturbation are assumed to be small.
\end{itemize}

From the above results, a natural question is: \emph{Whether can we get the nonlinear stability of combination of viscous contact wave with rarefaction waves for the case $\mu=0$ and $\kappa>0 ?$} This is the motivation of our work. As a continuation of \cite{Fan-Matsumura-JDE-2015}, this manuscript showed that, if the strengths of the viscous waves and the initial perturbation are suitably small, the unique global-in-time solution exists and asymptotically tends toward the corresponding viscous contact wave or the composition of a viscous contact wave with rarefaction waves from different families. Our result generalizes the corresponding results obtained by Huang, Li and Matsumura in \cite{Huang-Li-Matsumura-ARMA-2010} for the case of $\mu>0, \kappa>0$ to the case of $\mu>0, \kappa=0$ and extends the result of Ma and Wang \cite{Ma-Wang-JMP-2016} for the nonlinear stability of the viscous contact waves to the nonlinear stability of the composition of a viscous contact wave with rarefaction waves from different families.

Now we outline the main difficulties of the problem and our strategy to overcome the difficulties. The first difficulty is due to the fact that the system \eqref{1.1} is of less dissipation caused by the fact that $\mu=0$, thus we need more subtle estimates to recover the regularity and dissipation for the components of the hyperbolic part. We shall overcome this difficulty by manipulating several new energy estimates and also looking for the perturbed solution for the integrated system of (\ref{1.1}) in $C([0, \infty), H^2)$ to control the nonlinearity of the hyperbolic part, instead of the usual $C([0, \infty), H^1)$ in \cite{Huang-Li-Matsumura-ARMA-2010}. Secondly, our stability results included the superposition of the rarefaction waves with the viscous contact discontinuity, which will lead to the degenerate characteristics. Thanks to the  estimates on the heat kernel function obtained by Huang, Li and Matsumura established in \cite{Huang-Li-Matsumura-ARMA-2010}, we can still close the energy type estimates for our non-viscous case and get the desired results.

Before concluding this section, it is worth pointing out that there are many results on the nonlinear stability of basic wave patterns to some hyperbolic conservation laws with dissipation,  cf. \cite{Goodman_ARMA-1986, Liu-Zeng-CMP-2009, Szepessy-Xin-ARMA-1993} for hyperbolic conservation laws with artificial viscosity, \cite{He-Tang-Wang-AMSSB-2016, Liu-Yang-Zhao-Zou-SIAMJMA-2014, Huang-Liao-M3AS-2017, Huang-Wang-Xiao-KRM-2016, Wan-Wang-JDE-2017, Wang-Zhao-M3AS-2016} for compressible Navier-Stokes equations with density and/or temperature dependent transportation coefficients, and \cite{Chen-He-Zhao-JMAA-2015, Chen-Xiao-MMAS-2013, Chen-Xiong-Meng-JMAA-2014, Chen-Zhao-JMPA-2014} for compressible fluid models of Korteweg type, and so on.

The rest of the paper is arranged in the following way: in the next section, we will give some elementary properties of the viscous  contact wave  and rarefaction wave and state the main results. Main theorem will be proved in the section 3.

\section{Preliminaries and Main Results}
\setcounter{equation}{0}
To show our main results,
in this section, we will construct the two desired viscous contact wave and viscous rarefaction waves
for (\ref{1.1}) and state the main results.  For each
$z_-:=(v_{-},u_{-},\theta_{-})$,  we can see our
situation takes place provided $z_+:=(v_+,u_+,\theta_+)$
is located on a quarter of a curved surface in a small neighborhood of $z_-$.
In what follows, as \cite{Fan-Matsumura-JDE-2015},  the neighborhood of $z_-$ denoted by
$\Omega_-$ is given by
$$
 \Omega_- = \{(v, u, \theta) |\ | ( v - v_-, u-u_-, \theta - \theta_- ) | \le \bar{\delta} \},
$$
where $\bar{\delta}$ is a positive constant depending only on $z_-$.

\subsection{Viscous Contact Wave}
we firstly recall the viscous contact wave $(\widetilde{v},\widetilde{u},\widetilde{\theta})$  for the compressible system
(\ref{1.1}) defined in \cite{Huang-Xin-Yang-AM-2008}. For the Riemann problem (\ref{1.4}),(\ref{1.5}),
it is known that the contact discontinuity solution $\widetilde{Z}(x,t):=(\widetilde{V},\widetilde{U},\widetilde{\Theta})(x,t)$ takes the form
\begin{eqnarray}\label{2.1}
(\widetilde{V},\widetilde{U},\widetilde{\Theta})(x,t)=
\left\{
\begin{array}{l}
  (v_-, u_-, \theta_-), \ x<0,\ t>0 \, .\\[2mm]
  (v_+, u_+, \theta_+), \ x>0,\ t>0 \, .
\end{array}
\right.
\end{eqnarray}
provided that
\begin{eqnarray}\label{2.2}
  u_- = u_+, \quad p_- = \frac{R \theta_-}{v_-} = \frac{R \theta_+}{v_+} = p_+\, .
\end{eqnarray}
In the setting of the compressible Navier-Stokes system (\ref{1.1}),
the smooth approximate wave $(\widetilde{v},\widetilde{u},\widetilde{\theta})$ to the contact wave
 behaves as a diffusion wave
due to the dissipation effect and  we call this wave "viscous contact wave".
it can be constructed as follows.
Since the pressure  is constant under the condition
(\ref{2.7}), we set
\begin{eqnarray}\label{2.3}
  p_+ = \frac{R \widetilde{\theta}}{\widetilde{v}} \, ,
\end{eqnarray}
which indicates the leading part of the energy equation $(\ref{1.1})_3$ is
\begin{eqnarray}\label{2.4}
  \tfrac{R}{\gamma-1} \widetilde{\theta}_t + p_+ \widetilde{u}_x = \kappa \Big( \tfrac{\widetilde{\theta}_x}{\widetilde{v}} \Big)_x \, .
\end{eqnarray}
Meanwhile the equation $\widetilde{v}_t=\widetilde{u}_x$ leads to a nonlinear diffusion equation
\begin{eqnarray}\label{2.5}
\left\{
\begin{array}{l}
  \widetilde{\theta}_t = a \Big( \frac{\widetilde{\theta}_x}{\widetilde{\theta}} \Big)_x, \quad a= \tfrac{\kappa p_+ (\gamma-1)}{\gamma R^2}, \\[2mm]
  \Theta(\pm,t) = \theta_{\pm} \, .
\end{array}
\right.
\end{eqnarray}
which has a unique self similarity solution $\widetilde{\theta}(x,t)=\widetilde{\theta}(\xi),\ \xi=\frac{x}{\sqrt{1+t}}$. Furthermore, on the one hand, $\widetilde{\theta}(\xi)$ is a monotone function, increasing if $\theta_+>\theta_-$ and decreasing if $\theta_+<\theta_-$; on the other hand, there exists some positive constant $\delta_0$, such that for $\delta = |\theta_+-\theta_-| \leq \delta_0 (\leq \overline{\delta})$, $\widetilde{\theta}$ satisfies
\begin{eqnarray}\label{2.6}
  (1+t) | \widetilde{\theta}_{xx} | + (1+t)^{\frac{1}{2}} | \widetilde{\theta}_{x} | +
  | \widetilde{\theta} - \theta_\pm | \lesssim \delta e^{-\frac{x^2}{1+t}},   \quad as \quad
  | x | \rightarrow \infty \, .
\end{eqnarray}
Once $\widetilde{\theta}$ is defined, the viscous contact profile
$$Z^c(x,t):=(V^c,U^c,\Theta^c)(x,t)$$
 is  determined as follows:
\begin{eqnarray}\label{2.7}
  \Theta^c = \widetilde{\theta}, \quad V^c = \frac{R}{p_+} \widetilde{\theta} \, , \quad
  U^c = u_- + \frac{\kappa (\gamma-1)}{\gamma R} \frac{\widetilde{\theta}_{x}}{\widetilde{\theta}}
  \, .
\end{eqnarray}
It is straightforward to check that  $Z^c(x,t)$ satisfies
\begin{eqnarray*}
  \| (Z^c - \widetilde{Z})(t) \|_{L^p} = O(\kappa^\frac{1}{2p}) (1+t)^\frac{1}{2p},\ p \geq 1 \, ,
\end{eqnarray*}
which means the nonlinear diffusion wave $Z^c(x,t)$ approximates the contact discontinuity
$\widetilde{Z}(x,t)$ to the Euler system (\ref{1.4}) in $L^p(p\geq1)$
norm. Moreover, the viscous contact wave  $Z^c(x,t)$
solves the compressible Navier-Stokes system  without viscosity (\ref{1.1}) as
\begin{eqnarray}\label{2.8}
\left\{
\begin{array}{ll}
  V^c_t - U^c_x = 0 \, ,\\[2mm]
  U^c_t + \Big( \frac{R\Theta^c}{V^c} \Big)_x = U^c_t \, , \\[2mm]
  \tfrac{R}{\gamma-1} \Theta^c_t + p_+ U^c_x = \kappa \Big( \frac{\Theta^c_x}{V^c} \Big)_x \, .
\end{array}
\right.
\end{eqnarray}

Our first main result is as follow:
\begin{Theorem}
For any given $z_-$, assume that $z_+\in \Omega_-$ satisfies (\ref{2.2}),
let $Z^c(x,t)$ is the viscous contact wave defined in (\ref{2.7}) with strength
$\delta=|\theta_+-\theta_-|$. There exist positive constants $\epsilon_1$
and $\delta_1(\leq \delta_0)$, such that if $\delta<\delta_1$ and the initial data satisfies
\begin{eqnarray}\label{2.9}
  \| (v_0(\cdot) - V(\cdot,0), u_0(\cdot) - U(\cdot,0), \theta_0(\cdot) - \Theta(\cdot,0)) \|_2 \leq \epsilon_1,
\end{eqnarray}
then the Cauchy problem (\ref{1.1}), (\ref{1.2}) admits a unique global solution $(v,u,\theta)(t,x)$ satisfies
\begin{eqnarray*}
\begin{aligned}
  ( v-V^c, u-U^c, \theta-\Theta^c )(t,x) \in X[0,+\infty)
\end{aligned}
\end{eqnarray*}
and
\begin{eqnarray}\label{2.10}
  \lim \limits_{t \rightarrow \infty} \sup \limits_{x \in {\mathbb{R}}} |( v-V^c, u-U^c, \theta-\Theta^c )(x,t)| = 0 \, ,
\end{eqnarray}
where the solution space $X(0,t)$ is defined in (\ref{3.5}).
\end{Theorem}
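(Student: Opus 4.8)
The plan is to prove the theorem by the classical scheme of local existence plus uniform global a priori estimates, closed by a continuation argument, followed by a short asymptotic analysis. First I would introduce the perturbation $(\phi,\psi,\zeta):=(v-V^c,\,u-U^c,\,\theta-\Theta^c)$ and subtract \eqref{2.8} from \eqref{1.1} to obtain the system for $(\phi,\psi,\zeta)$. The decisive structural point is that $Z^c$ solves \eqref{1.1} only up to the error terms exhibited in \eqref{2.8}: the perturbed momentum equation carries the source $-U^c_t$ and the energy equation carries errors built from $\widetilde\theta_x,\widetilde\theta_{xx}$. From \eqref{2.7} and the estimate \eqref{2.6}, all of these decay like the heat kernel $(1+t)^{-1}e^{-cx^2/(1+t)}$ (with extra time decay for $U^c_t$), hence are time-integrable once the heat-kernel estimates of Huang--Li--Matsumura are invoked. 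Because $\mu=0$ removes the parabolic smoothing of the velocity, I would, as announced in the introduction, seek the solution at the $H^2$ level and work with the anti-derivative (integrated) formulation so as to improve the dissipative structure. The a priori framework is: assume a solution on $[0,T]$ with $N(T):=\sup_{[0,T]}\|(\phi,\psi,\zeta)\|_2$ small, keep the positive lower and upper bounds on $v$ and $\theta$, and derive a closed bound $\|(\phi,\psi,\zeta)(t)\|_2^2+\int_0^t(\text{dissipation})\,ds\lesssim\|(\phi,\psi,\zeta)(0)\|_2^2+\delta^{\alpha}$ for some $\alpha>0$; local existence in the work space $X(0,T)$ is standard via linearization and iteration.

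The zeroth-order step uses the relative-entropy (mechanical energy) multipliers: testing the $\psi$-equation against $\psi$ and the $\zeta$-equation against $\zeta/\theta$, and combining with $\phi_t=\psi_x$, yields $L^2$ control of $(\phi,\psi,\zeta)$ together with the heat-conduction dissipation $\int_0^t\|\zeta_x\|^2\,ds$, the interaction terms with $Z^c$ being absorbed by the Gaussian bounds \eqref{2.6}. To recover dissipation for $\phi$, I would exploit the pressure structure $p-p_+=-\tfrac{p_+}{v}\phi+\tfrac{R}{v}\zeta$: testing the momentum equation against $\phi_x$ and converting $\int\psi_t\phi_x\,dx=\tfrac{d}{dt}\int\psi\phi_x\,dx+\int\psi_x^2\,dx$ via $\phi_t=\psi_x$, the leading pressure term yields the positive definite quantity $\tfrac{p_+}{v}\int\phi_x^2\,dx$, giving control of $\int_0^t\|\phi_x\|^2\,ds$ modulo the velocity-gradient term $\int\psi_x^2\,dx$ and the $\zeta$-coupling.

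The main obstacle is precisely the velocity gradient $\psi_x$: no term in \eqref{1.1} supplies direct damping for it, and it reappears with the wrong sign in the $\phi_x$ identity above. I therefore expect to produce its dissipation at the next derivative level and feed it back. Concretely I would differentiate the system once and twice in $x$ and run the analogous energy identities in $H^1$ and $H^2$: the heat operator now dissipates $\zeta_{xx},\zeta_{xxx}$, the pressure cross-term mechanism again supplies $\phi_{xx},\phi_{xxx}$, and the velocity derivatives $\psi_x,\psi_{xx}$ are closed by interpolation and absorption using the smallness of $N(T)$ and $\delta$. This is exactly the point where the $H^2$ working space (rather than the $H^1$ of \cite{Huang-Li-Matsumura-ARMA-2010}) is essential, since the quadratic nonlinearities of the non-dissipative hyperbolic part must be controlled in $L^\infty$ through $\|\cdot\|_2$; the heat-kernel estimates again handle every source produced by $U^c_t$ and the wave errors after integration in time.

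Once all estimates are assembled and $\delta<\delta_1$, $\epsilon_1$ are chosen small, the a priori inequality closes, giving $N(T)\le C(\epsilon_1+\delta^{\alpha})$ uniformly in $T$; combined with the local theory this continues the solution globally and places $(\phi,\psi,\zeta)$ in $X[0,+\infty)$. Finally, the uniform energy bound furnishes $\int_0^\infty\|(\phi_x,\psi_x,\zeta_x)\|^2\,ds<\infty$ together with $\int_0^\infty\big|\tfrac{d}{dt}\|(\phi_x,\psi_x,\zeta_x)\|^2\big|\,ds<\infty$, whence $\|(\phi_x,\psi_x,\zeta_x)(t)\|\to0$ as $t\to\infty$. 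The interpolation $\|f\|_{L^\infty}^2\le 2\|f\|\,\|f_x\|$, applied to each component with $\|(\phi,\psi,\zeta)\|$ already bounded, then yields $\sup_{x}|(\phi,\psi,\zeta)(x,t)|\to0$, which is precisely \eqref{2.10}.
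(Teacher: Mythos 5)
Your overall architecture (local existence, a priori $H^2$ estimates closed by continuation, relative-entropy multipliers at zeroth order, the Huang--Li--Matsumura heat-kernel lemma for the Gaussian-localized source terms, and the final Sobolev argument via $\int_0^\infty\big(\|(\phi_x,\psi_x,\xi_x)\|^2+|\frac{d}{dt}\|(\phi_x,\psi_x,\xi_x)\|^2|\big)\,d\tau<\infty$) matches the paper, which proves this theorem as the special case $z^m_\pm=z_\pm$ of its composite-wave theorem. However, there is a genuine gap at the step you yourself identify as the main obstacle: the recovery of $\int_0^t\|\psi_x\|^2\,d\tau$. You correctly observe that testing the momentum equation against $\phi_x$ produces $\int\phi_x^2$ at the price of a term $-\int\psi_x^2$ with an $O(1)$ coefficient and the wrong sign, but your proposed remedy --- ``produce its dissipation at the next derivative level'' and close $\psi_x,\psi_{xx}$ ``by interpolation and absorption using the smallness of $N(T)$ and $\delta$'' --- does not work. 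A negative term with an $O(1)$ coefficient cannot be absorbed by smallness, and differentiating the system only reproduces the same defect one order higher: the $\phi_{xx}$ cross-term identity generates a bad $-\int\psi_{xx}^2$, and so on; no amount of interpolation converts the available dissipation ($\xi$-derivatives and $\phi$-derivatives) into time-integrated control of $\psi_x$.

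The missing idea, which is the crux of Lemma 3.5 in the paper (and of Fan--Matsumura), is to pair the multiplier $-\frac{P}{2}\phi_x$ on the momentum equation with the multiplier $\psi_x$ on the \emph{energy} equation. The energy equation contains the coupling term $p\,\psi_x$, so this multiplication produces the positive-definite quantity $p\,\psi_x^2$ directly; combined with the $-\frac{P}{2}\psi_x^2$ coming from $\int\psi_t\phi_x\,dx$, one arrives at the identity (3.48) with the good terms $\frac{P^2}{2v}\phi_x^2+\frac{P}{2}\psi_x^2$ on the left. The remaining terms $\frac{R}{\gamma-1}\xi_t\psi_x$ and $\kappa(\cdot)_x\psi_x$ are handled by integration by parts and substitution of the momentum equation, and are controlled by the $\xi$-dissipation already secured. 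Without this cross multiplier on the energy equation (or an equivalent Kawashima-type compensating functional), your scheme does not close; with it, the rest of your outline goes through essentially as in the paper.
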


\subsection{Composition Waves}

When the relation (\ref{2.2}) is fails, the basic theory  of conservation laws \cite{Smoller-Sringer-1994} implies that for any given constant state $z_-$ and $z_+ \in \Omega_-$ ( $\overline{\delta}$ is suitably small ), the Riemann problem (\ref{1.4}), (\ref{1.5}) has  a unique solution.
Based on this, our second purpose is concerned with the stability of superposition of a viscous contact wave with rarefaction
waves. In this situation, we assume that
\begin{eqnarray}\label{2.11}
  z_+ \in R_1CR_3(z_-)\subseteq \Omega_- \, ,
\end{eqnarray}
where
\begin{eqnarray}\label{2.12}
  && R_1CR_3(z_-):= \bigg\{ (v,u,\theta) \in \Omega_- \bigg| s \neq s_- \\[2mm]
  && u\geq u_- - \int^{e^{\frac{\gamma-1}{R\gamma} ( s-s_- ) v}}_{v_-} \lambda_1 (\eta,s_-) d \eta, \quad u\geq u_- - \int^{v_-}_{e^{\frac{\gamma-1}{R\gamma}(s-s_-)v}} \lambda_3 (\eta,s) d \eta \bigg\} \nonumber
\end{eqnarray}
with the entropy $ s $ in (\ref{1.2}) is defined as follows:
\begin{eqnarray}\label{2.13}
  s = \frac{R}{\gamma-1} \ln \frac{R \theta}{A} + R \ln v \, , \quad
  s_\pm = \frac{R}{\gamma-1} \ln \frac{R \theta_{\pm}}{A} + R \ln v_\pm \, .
\end{eqnarray}
It is known that there exists some suitably small $\delta_1>0$ such that for
$$|\theta_--\theta_+|\leq \delta_1,$$
there exists  a unique pair of points $z^m_-:=(v^m_-,u^m,\theta^m_-)$
and $z^m_+:=(v^m_+,u^m,\theta^m_+)$ in $\Omega_-$ satisfying
\begin{eqnarray}\label{2.14}
  \frac{R \theta^m_-}{v^m_-}=\frac{R \theta^m_+}{v^m_+}:=p_m
\end{eqnarray}
 and
 \begin{eqnarray}\label{2.15}
|v^m_\pm-v_\pm|+|u^m-u_\pm|+|\theta^m_\pm-\theta_\pm| \lesssim |\theta_+-\theta_-| \, .
\end{eqnarray}
 Moreover, the point $z^m_-$ belongs to the 1-rarefaction wave curve
 $z^r_1:=(v^r_1,u^r_1,\theta^r_1)(\frac{x}{t}) $  which connected with  $z_-$,
 while $z^m_+$ belongs to the 3-rarefaction wave curve
 $z^r_3:=(v^r_3,u^r_3,\theta^r_3)(\frac{x}{t}) $ which connected with  $z_+$. that is, the 1-rarefaction wave is the weak solution of Riemann problem of the Euler system (\ref{1.4}) with the following Riemann data
 \begin{eqnarray*}
  z^r_1(x,0)=
\left\{
\begin{array}{l}
  (v_-,u_-,\theta_-),\ x<0,\\[2mm]
  (v^m_-,u^m,\theta^m_-),\ x>0
\end{array}
\right.
\end{eqnarray*}
 and the 3-rarefaction wave with Riemann data as
 \begin{eqnarray*}
  z^r_3(x,0)=
\left\{
\begin{array}{l}
  (v^m_+,u^m,\theta^m_+),\ x<0 \, ,\\[2mm]
  (v_+,u_+,\theta_+),\ x>0 \, .
\end{array}
\right.
\end{eqnarray*}
Since rarefaction waves $z^r_i(\frac{x}{t})(i=1,3)$  are not smooth enough, it is convenient to construct smooth approximations of them. Motivated by \cite{Kawashima-Matsumura-CMP-1985}, we start with the problem of the  Burgers equation:
\begin{eqnarray}\label{2.16}
\left\{
\begin{array}{l}
  w^r_t + w^r w^r_x = 0, \quad x \in {\mathbb R},\ t>0,\\[2mm]
  w^r(0,x) = w^r_0(x):= \tfrac{1}{2} \big( w_r + w_l \big) +
\tfrac{1}{2} \big( w_r - w_l \big) \tanh(x),
\end{array}
\right.
\end{eqnarray}
where $w_l = \lambda_1 (v_-, s_-)$, $w_r = \lambda_1 (v_+, s_-)$. Let $w(x,t)$ be the unique global solution of (\ref{2.16}), then $Z^r_1(x,t) := (V^r_1, U^r_1, \Theta^r_1) (x,t)$ are defined by
\begin{eqnarray}\label{2.17}
\left\{
\begin{array}{ll}
  \lambda_1 (V^r_1,s_-) = w(x,t),\\[2mm]
  U^r_1 = u_- - \int^{V^r_1}_{v_{-}} \lambda_1 (\eta, s_-) d \eta,\\[2mm]
  \Theta^r_1 = \theta_- (v_-)^{\gamma - 1} (V^r_1)^{1 - \gamma}
\end{array}
\right.
\end{eqnarray}
with
$$w_l = \lambda_1 (v_-,s_-), \quad w_r = \lambda_1 (v^m_-, s_-)$$
and the smooth approximation of  $(v^r_3, u^r_3, \theta^r_3)(\frac{x}{t})$ is given by
$Z^r_3(x,t) := (V^r_3, U^r_3, \Theta^r_3)(x,t)$  constructed by the same manner as (\ref{2.17})
\begin{eqnarray}\label{2.18}
\left\{
\begin{array}{ll}
  \lambda_3 (V^r_3, s_+) = w(x,t),\\[2mm]
  U^r_3 = u_+ - \int^{V^r_3}_{v_{+}} \lambda_3 (\eta, s_+) d \eta,\\[2mm]
  \Theta^r_3 = \theta_+ (v_+)^{\gamma-1} (V^r_3)^{1-\gamma},
\end{array}
\right.
\end{eqnarray}
where $w(x,t)$ is defined in (\ref{2.16}) with
$$w_l = \lambda_3(v^m_+, s_+), \quad w_r = \lambda_3 (v_+, s_+).$$
And since the condition (\ref{2.14}), (\ref{2.15}), $z^m_\pm$ is connected by
the viscous contact wave $Z^{c}(x,t)$ constructed in
 (\ref{2.5}), (\ref{2.7}). Finally, we investigate some properties of
$Z^r_i(x,t)(i=1,3)$  and $Z^{c}(x,t)$ as \cite{Huang-Li-Matsumura-ARMA-2010},
divided ${\mathbb{R}}\times [0,t]$ into three parts
${\mathbb{R}} \times [0,t] = \Omega_1 \cup \Omega_c \cup \Omega_3$ with
\begin{eqnarray}\label{2.19}
\begin{aligned}
  &\Omega_1=\big\{(x,t)|2x<\lambda_1(v_-^m,s_-)t\big\},\\[2mm]
  &\Omega_3=\big\{(x,t)|2x>\lambda_3(v_+^m,s_+)t\big\},\\[2mm]
  &\Omega_c=\big\{(x,t)|\lambda_1(v_-^m,s_-)t\leq 2x \leq\lambda_3(v_+^m,s_+)t\big\},
\end{aligned}
\end{eqnarray}
it holds that
\begin{Lemma}(\cite{Huang-Li-Matsumura-ARMA-2010})
For any given $z_-$, assume that $z_+ \in R_1CR_3(z_-)\subset \Omega_-$,
then the smooth rarefaction wave $Z^r_i(x,t)(i=1,3)$
and $Z^c$ satisfy:\\
$(i)$ \ $V^r_{it} = U^r_{ix} > 0(i=1,3)$ for all $x \in {\mathbb{R}},\ t>0$.\\
$(ii)$ \ For $1\leq p \leq \infty$ and $i=1,3$, it holds
\begin{eqnarray*}
  &&\| (V^r_{ix}, U^r_{ix}, \Theta^r_{ix}) (t) \|_{L^p} \lesssim \min \{ \delta, \delta^\frac{1}{p} (1+t)^{-1 + \frac{1}{p}} \},\\[2mm]
  &&\| (V^r_{ixx}, U^r_{ixx}, \Theta^r_{ixx}) (t) \|_{L^p} \lesssim \min \{ \delta, (1+t)^{-1} \}.
\end{eqnarray*}
$(iii)$ \ In $\Omega_c$, we have for $i=1,3$
\begin{eqnarray*}
  | Z^r_{ix} | + | Z^r_{1} - z^m_- | + | Z^r_{3} - z^m_+ | \lesssim \delta e^{-c(|x|+t)}
\end{eqnarray*}
and in $\Omega_i(i=1,3)$
\begin{eqnarray*}
  &&| Z^{c}_x | + | Z^{c} - z^m_\pm | \lesssim \delta e^{ -c(|x|+t)},\\[2mm]
  &&| Z^r_{ix} | + | Z^r_{1} - z^m_- | + | Z^r_{3} - z^m_+ | \lesssim \delta e^{ -c(|x|+t)}.
\end{eqnarray*}
$(iv)$ For the rarefaction wave $z^r_i\big(\frac{x}{t}\big)(i=1,3)$, it holds
\begin{eqnarray*}
  \lim\limits_{t\rightarrow \infty}\sup\limits_{x\in {\bf{R}}}|Z^r_i(x,t)-z^r_i(\tfrac{x}{t})|=0.
\end{eqnarray*}
\end{Lemma}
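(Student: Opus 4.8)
The plan is to reduce all four assertions to the corresponding scalar properties of the solution $w(x,t)$ of the inviscid Burgers problem \eqref{2.16}, and then to propagate those properties through the algebraic relations \eqref{2.17}, \eqref{2.18} defining $Z^r_i$ and through \eqref{2.7}, \eqref{2.6} for $Z^c$. Note first that the initial datum $w^r_0$ is smooth and strictly increasing: for $i=1$ one has $w_l=\lambda_1(v_-,s_-)<\lambda_1(v^m_-,s_-)=w_r$ because $\lambda_1(\cdot,s_-)$ is increasing in $v$ along the $1$-rarefaction curve, and symmetrically $w_l<w_r$ for the $3$-wave. Hence the characteristics of \eqref{2.16} spread out and the solution remains globally smooth. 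First I would record the implicit representation $w(x,t)=w^r_0(x_0)$, $x=x_0+w^r_0(x_0)\,t$, which gives $w_x=w^r_0{}'(x_0)/(1+w^r_0{}'(x_0)t)>0$ for all $t>0$, and, after one more differentiation, a workable formula for $w_{xx}$.

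From this representation the scalar decay estimates follow by elementary calculus. Since $w^r_0{}'\lesssim\delta$ and $w^r_0{}'$ decays exponentially in $|x_0|$, one obtains the pointwise bound $w_x\lesssim\min\{\delta,(1+t)^{-1}\}$ together with $\|w_x(t)\|_{L^p}\lesssim\min\{\delta,\delta^{1/p}(1+t)^{-1+1/p}\}$ and $\|w_{xx}(t)\|_{L^p}\lesssim\min\{\delta,(1+t)^{-1}\}$; moreover, outside the spreading fan, i.e.\ where $x/t$ stays away from $[w_l,w_r]$, both $w_x$ and $w-w_{l},\,w-w_r$ are exponentially small in $|x|+t$. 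These are precisely the scalar analogues of (ii) and of the exponential bounds in (iii).

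Next I would transfer these to $(V^r_i,U^r_i,\Theta^r_i)$. Because $\lambda_i(\cdot,s_\pm)$ is smooth and strictly monotone in $v$ with nonvanishing derivative on $\Omega_-$, the relation $\lambda_i(V^r_i,s_\pm)=w$ determines $V^r_i$ as a smooth function of $w$ with bounded, nonvanishing derivative; differentiating gives $V^r_{ix}=w_x/\partial_v\lambda_i$ and $V^r_{it}=w_t/\partial_v\lambda_i=-w\,V^r_{ix}$, while differentiating the formula for $U^r_i$ yields $U^r_{ix}=-\lambda_i(V^r_i,s_\pm)V^r_{ix}=-w\,V^r_{ix}=-\tfrac{\lambda_i}{\partial_v\lambda_i}\,w_x$. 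Hence $V^r_{it}=U^r_{ix}$, and since $w_x>0$ while $-\lambda_i/\partial_v\lambda_i>0$ on each fan (increasing characteristic speed along the rarefaction curve: $\lambda_1<0,\ \partial_v\lambda_1>0$ and $\lambda_3>0,\ \partial_v\lambda_3<0$), both are positive, which is (i). As $U^r_i$ and $\Theta^r_i$ are smooth functions of $V^r_i$, the chain rule turns the scalar $L^p$ bounds for $w_x,w_{xx}$ into the vector bounds (ii), the extra $V^r_{ix}$ factors being harmless since they are themselves $O(\delta)$.

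The region-dependent statement (iii) is where the real work lies and is, in my view, the main obstacle. The three families are separated in speed: the $1$-fan lives where $x/t\lesssim\lambda_1(v^m_-,s_-)<0$, the $3$-fan where $x/t\gtrsim\lambda_3(v^m_+,s_+)>0$, and the contact wave $Z^c$ concentrates near $x=0$. Thus in $\Omega_c$ the characteristic foot $x_0$ stays bounded away from the support of $w^r_0{}'$, giving $|Z^r_{ix}|+|Z^r_1-z^m_-|+|Z^r_3-z^m_+|\lesssim\delta e^{-c(|x|+t)}$; conversely in $\Omega_i$ the contact variable obeys $|x|\gtrsim t$, so the Gaussian factor $e^{-x^2/(1+t)}$ in \eqref{2.6} is bounded by $e^{-c(|x|+t)}$ and controls $|Z^c_x|+|Z^c-z^m_\pm|$, while the rarefaction contribution there is again exponentially small by the same characteristic argument. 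The delicate point is matching the Gaussian decay of $Z^c$ against the linear separation $|x|\sim t$ and checking the constants are uniform for $\delta\le\delta_1$. Finally, for (iv) I would use that the inviscid Burgers solution converges uniformly to the self-similar expansion fan $w^R(x/t)$ as $t\to\infty$ (the smoothing built into $w^r_0$ is washed out at large time), and transfer this through the continuous invertible maps $w\mapsto V^r_i\mapsto(U^r_i,\Theta^r_i)$ to obtain $\sup_{x}|Z^r_i(x,t)-z^r_i(x/t)|\to 0$.
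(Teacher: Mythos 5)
Your proposal is correct and follows essentially the same route as the source: the paper itself states this lemma without proof, citing \cite{Huang-Li-Matsumura-ARMA-2010}, and the argument there (going back to Matsumura--Nishihara) is exactly your reduction to the inviscid Burgers solution via characteristics, the chain rule through $\lambda_i(V^r_i,s_\pm)=w$, the separation of wave speeds for the regional exponential bounds, and the uniform convergence of the smoothed Burgers profile to the expansion fan. The signs in (i), the $L^p$ interpolation giving $\|w_x\|_{L^p}\lesssim \delta^{1/p}\min\{\delta,(1+t)^{-1}\}^{1-1/p}$, and the comparison of the Gaussian factor $e^{-cx^2/(1+t)}$ with $e^{-c(|x|+t)}$ on $\{|x|\gtrsim t\}$ all check out as you describe.
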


Setting $Z(x,t) := (V, U, \Theta)(x,t)$ is
 \begin{eqnarray}\label{2.20}
\left\{
\begin{array}{ll}
  V(x,t) = V^r_1(x,t) + V^{c}(x,t) + V^r_3(x,t) - v^m_- - v^m_+, \\[2mm]
  U(x,t) = U^r_1(x,t) + U^{c}(x,t) + U^r_3(x,t) - 2 u^m, \\[2mm]
  \Theta(x,t) = \Theta^r_1 (x,t) + \Theta^{c} (x,t) + \Theta^r_3 (x,t) - \theta^m_- - \theta^m_+.
\end{array}
\right.
\end{eqnarray}

Then our second main result is as follow:
\begin{Theorem}
For any given $z_-$, assume that $z_+ \in R_1CR_3(z_-)\subseteq \Omega_-$ with
$|\theta_+-\theta_-|\leq \delta_1$. There exist positive constants $\epsilon_2$
and $\delta_2(\leq \min\{\overline{\delta},\delta_1\})$, such that if $\delta<\delta_2$ and
the initial data satisfies
\begin{eqnarray}\label{2.21}
  \| (v_0(\cdot) - V(\cdot,0), u_0(\cdot) - U(\cdot,0), \theta_0(\cdot) - \Theta(\cdot,0)) \|_2 \leq \epsilon_2,
\end{eqnarray}
then the Cauchy problem (\ref{1.1}), (\ref{1.2}) admits a unique global
solution $(v, u, \theta)(x,t)$ satisfying
$$( v-V, u-U, \theta-\Theta )\in X([0, +\infty))$$
 and
\begin{eqnarray}\label{2.22}
\lim \limits_{t \rightarrow \infty} \sup \limits_{x \in {\mathbb{R}}}
\left\{
\begin{array}{ll}
  | ( v - v^r_1 - V^{c} - v^r_3 + v^m_- + v^m_+ )(x,t) |\\[2mm]
  | ( u - u^r_1 - U^{c} - u^r_3 + 2u^m )(x,t) |\\[2mm]
  | ( \theta - \theta^r_1 - \Theta^{c} - \theta^r_3 + \theta^m_- + \theta^m_+ )(x,t) |
\end{array}
\right\}
=0
\end{eqnarray}
where the solution space $X(0,t)$ is defined in (\ref{3.5}).
\end{Theorem}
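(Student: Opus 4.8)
The plan is to recast the problem in terms of the perturbation $(\phi,\psi,\zeta):=(v-V,u-U,\theta-\Theta)$, where $(V,U,\Theta)$ is the superposition ansatz (\ref{2.20}), and to run a continuation argument: local existence in the solution space $X(0,T)$ of (\ref{3.5}), together with a uniform a priori estimate, extends the local solution to a global one. First I would substitute $(v,u,\theta)=(V+\phi,U+\psi,\Theta+\zeta)$ into (\ref{1.1}) and subtract the equations satisfied by the background waves. The continuity equation is reproduced exactly, since each of $V^r_1,V^c,V^r_3$ obeys $V_t=U_x$; the momentum and energy equations, however, carry an error term built from three sources: the quadratic remainder of the pressure nonlinearity, the residual $U^c_t$ by which the viscous contact wave fails to solve the inviscid momentum balance (see (\ref{2.8})), and the heat-conduction residuals $\kappa(\Theta^r_{ix}/V^r_i)_x$ of the rarefaction pieces (which solve the Euler system, not (\ref{1.1})), together with the cross-interaction terms among the three waves. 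By Lemma 2.1(iii) and the heat-kernel bound (\ref{2.6}), all interaction and residual terms are exponentially small away from their own wave region and integrable in $t$, which is exactly what is needed to absorb them later.

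Next I would set $N(T)=\sup_{0\le t\le T}\|(\phi,\psi,\zeta)(t)\|_{2}$ and derive the basic $L^2$ estimate from a relative-entropy functional of the form $\int\big(\tfrac12\psi^2+R\Theta\,\Phi(v/V)+\tfrac{R}{\gamma-1}\Theta\,\Phi(\theta/\Theta)\big)\,dx$ with $\Phi(s)=s-1-\ln s\ge0$. Differentiating in time produces the heat-conduction dissipation $\kappa\int\zeta_x^2/(v\theta)\,dx$ and, thanks to the expansiveness $U^r_{ix}>0$ from Lemma 2.1(i), a nonnegative contribution from the rarefaction waves; the error terms are controlled by $C\delta$ times integrable factors using (\ref{2.6}) and Lemma 2.1. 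The genuine difficulty, and the point where $\mu=0$ bites, is that this estimate yields dissipation only for $\zeta$, none for $\phi$ or $\psi$.

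To recover the missing dissipation for the specific volume I would exploit the pressure coupling: since $p_v=-R\theta/v^2<0$, the functional $\int\psi\phi_x\,dx$ satisfies $\tfrac{d}{dt}\int\psi\phi_x\,dx+\int\psi_x^2\,dx=\int\psi_t\phi_x\,dx$, and inserting the momentum equation together with $p-P\approx p_v\phi+p_\theta\zeta$ turns the right-hand side into a positive multiple of $\int\phi_x^2\,dx$ plus terms absorbable by the heat dissipation. The cost is a term $\int\psi_x^2\,dx=\int\phi_t^2\,dx$, which is precisely why, unlike the viscous case of \cite{Huang-Li-Matsumura-ARMA-2010}, one cannot close at the $H^1$ level: controlling the velocity derivatives in the absence of viscosity forces one up to the $H^2$ framework. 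I would therefore differentiate the system once and twice, repeating the pressure-coupling mechanism to produce dissipation for $\phi_x,\phi_{xx}$ and the heat-conduction dissipation for $\zeta_x,\zeta_{xx}$, and close the estimates for $\psi_x,\psi_{xx}$ through the relations $\psi_x=\phi_t$, $\psi_{xx}=\phi_{xt}$ and the differentiated momentum equation. Summing the hierarchy with $\delta$ and $N(T)$ small gives $N(T)^2+\int_0^T\mathcal D(t)\,dt\le C\big(\|(\phi,\psi,\zeta)(0)\|_2^2+\delta^{\alpha}\big)$ for some $\alpha>0$, where $\mathcal D$ collects the heat-conduction dissipation on all $\zeta$-derivatives and the recovered $\phi$-derivative dissipation; this closes the continuation argument and yields the global solution.

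Finally, for the asymptotics (\ref{2.22}), from the uniform bound and the finiteness of $\int_0^\infty\mathcal D(t)\,dt$ — together with the boundedness of the time derivatives supplied by the equations — I would invoke the standard lemma that an $L^1(0,\infty)$ function with integrable derivative tends to zero to conclude $\|(\phi_x,\psi_x,\zeta_x)(t)\|\to0$, whence $\|(\phi,\psi,\zeta)(t)\|_{L^\infty}\to0$ by Sobolev embedding. Combining this with the convergence of the smooth rarefaction waves to $z^r_i(\tfrac{x}{t})$ in Lemma 2.1(iv) gives (\ref{2.22}). I expect the main obstacle to be exactly the recovery of dissipation for the hyperbolic variables $(\phi,\psi)$ in the non-viscous regime: the only intrinsic dissipation comes from heat conduction on $\zeta$, and it must be transferred to $\phi$ through the pressure and to $\psi$ through $\phi_t=\psi_x$, all while simultaneously handling the degenerating characteristic speeds of the rarefaction waves and the slow diffusive decay of the contact wave — this is what makes the $H^2$ energy hierarchy and the heat-kernel estimates (\ref{2.6}) indispensable.
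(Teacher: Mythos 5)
Your overall architecture --- perturbing around the superposed ansatz \eqref{2.20}, local existence plus continuation, a relative-entropy basic estimate with $\Phi(s)=s-1-\ln s$, an $H^2$ hierarchy, and a cross-term functional to recover dissipation for the hyperbolic variables --- is exactly the paper's (Propositions 3.1--3.2 and Lemmas 3.1--3.5). But one step fails as you describe it: in the basic estimate the contact wave does \emph{not} produce errors that are ``$C\delta$ times integrable factors.'' The terms generated by $U^c_x$, $\Theta^c_x$ (the quantities $Q_{12}$ and $Q_2$ in \eqref{3.24}--\eqref{3.26}) leave, after using \eqref{2.6}, a contribution of the form
\[
\delta\int_0^t\frac{1}{1+\tau}\int_{\mathbb{R}}(\phi^2+\xi^2)\,e^{-cx^2/(1+\tau)}\,dx\,d\tau
\]
on the right of \eqref{3.10}, and $(1+\tau)^{-1}$ is not integrable in time, so this cannot be absorbed by smallness of $N(t)$ or by Gronwall. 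Closing it requires the separate heat-kernel estimate of Huang--Li--Matsumura (Lemma 3.2, i.e.\ \eqref{3.30}--\eqref{3.32}), built on the antiderivative $g$ of the kernel $\omega$ and the identity $4\alpha g_t=g_{xx}$, which converts this weighted space-time integral into $1+\int_0^t\|(\phi_x,\psi_x,\xi_x)\|^2d\tau+\int_0^t\int(|U^r_{1x}|+|U^r_{3x}|)(\phi^2+\xi^2)\,dx\,d\tau$. You call the heat-kernel estimates ``indispensable'' in your closing remarks, but you never actually invoke this mechanism as a step; without it the basic estimate does not close.

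The second weak point is the recovery of $\int_0^t\|\psi_x\|^2d\tau$. The single functional $\int\psi\phi_x\,dx$ produces $\int\phi_x^2$ and $\int\psi_x^2$ with \emph{opposite} signs: choosing the sign that makes $\int\frac{P}{v}\phi_x^2$ a good term leaves $\int\psi_x^2$ as an uncontrolled source, and your fallback --- ``$\psi_x=\phi_t$ and the differentiated momentum equation'' --- does not bound $\int_0^t\|\phi_t\|^2d\tau$ by anything already estimated (with $\mu=0$ there is no a priori dissipation on $\psi$ at all). The paper's Lemma 3.5 resolves this by pairing $-\frac{P}{2}\phi_x\times(\ref{3.3})_2$ with $\psi_x\times(\ref{3.3})_3$: the energy equation contributes $p\,\psi_x^2$ with a favorable sign, which dominates the $-\frac{P}{2}\psi_x^2$ generated by the momentum multiplier, so that \eqref{3.48} carries both $\frac{P^2}{2v}\phi_x^2$ and $\frac{P}{2}\psi_x^2$ as positive terms; repeating this at the next order gives \eqref{3.47}, which is then inserted into \eqref{3.46}. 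You need this second multiplier (or an equivalent compensating device using the energy equation) --- the pressure coupling alone is not enough.
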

\begin{Remark}
 if  $z^m_-=z^m_+=(v_m,u_m,\theta_m)$,
we can also get the stability  of the composition of two rarefaction waves.
\end{Remark}

\section{Stability Analysis}
\setcounter{equation}{0}
In this section, we will prove the asymptotic behavior of the solution for nonvisous system (\ref{1.1}), (\ref{1.3}).
If $z^m_\pm=z_\pm$, Theorem 2.2 will coincide with the result of Theorem 2.1, here we will show the stability of the composition wave only.

\subsection{Reformed the System}
 $Z(x,t):= (V, U, \Theta)(x,t)$ defined in (\ref{2.20})
satisfies
\begin{eqnarray}\label{3.1}
\left\{
\begin{array}{ll}
  V_t - U_x = 0, \\[2mm]
  U_t + P_x = -R_1, \\[2mm]
  \tfrac{R}{\gamma-1} \Theta_t + P U_x = \kappa \big( \frac{\Theta_x}{V} \big)_x - R_2,
\end{array}
\right.
\end{eqnarray}
where
\begin{eqnarray}\label{3.2}
\begin{aligned}
  P := & \frac{R \Theta}{V}, \quad P_i = \frac{R \Theta^r_i}{V^r_i} \ (i=1,3),\\[2mm]
  R_1 := & -(P - P_1 - P_3 - p^c)_x + U^{c}_t := R^1_1 + U^{c}_t, \\[2mm]
  R_2 := & \big\{ (p_m - P) U^{c}_x + (P_1 - P) U^r_{1x} + (P_3 - P) U^r_{3x} \big\} \\[2mm]
  & + \kappa \bigg\{ \Big( \frac{\Theta_x}{V} \Big)_x - \Big( \frac{\Theta^{c}_x}{V^{c}} \Big)_x \bigg\} := R^1_2 + R^2_2.
 \end{aligned}
\end{eqnarray}
Let  the perturbation function is
\begin{eqnarray*}
(\phi, \psi, \xi):=(v, u, \theta) - (V, U, \Theta),
\end{eqnarray*}
then the reformed equation is
 \begin{eqnarray}\label{3.3}
\left\{
\begin{array}{ll}
  \phi_t - \psi_x = 0, \\[2mm]
  \psi_t + \Big( \frac{R \xi}{v} - \frac{P \phi}{v} \Big)_x = R_1,\\[2mm]
  \tfrac{R}{\gamma-1} \xi_t + p \psi_x + (p-P) U_x = \kappa \Big( \frac{\xi_x}{v} - \frac{\Theta_x \phi}{v V} \Big)_x + R_2
\end{array}
\right.
\end{eqnarray}
with the initial data
\begin{eqnarray}\label{3.4}
  (\phi, \psi, \xi)(x, 0)& = & (\phi_0, \psi_0, \xi_0)(x)\\[2mm]
  & = & (v_0(x)-V(x,0), u_0(x)-U(x,0), \theta_0(x)-\Theta(x,0)) \,. \nonumber
\end{eqnarray}
And the solution space is defined as
\begin{eqnarray}\label{3.5}
\begin{aligned}
  X(0,t) := & \big\{ ( \phi,\psi,\xi) | (\phi,\psi,\xi) \in C([0,t),H^2({\mathbb{R}}) ),\\[2mm]
  & ( \phi,\psi)_x \in L^2( [0,t),H^1( {\mathbb{R}} ) ), \xi_x \in L^2 ( [0,t), H^2({\mathbb{R}}) ) \big\}\,.
\end{aligned}
\end{eqnarray}
The local existence is  known in \cite{Nishihara-Yang-Zhao_SIAMJMA_2004}
\begin{Proposition}(Local existence)
Under the assumptions stated in Theorem 2.1, the Cauchy problem (\ref{3.3}), (\ref{3.4})
admits a unique smooth solution $(\phi,\psi,\xi)(x,t) \in X(0,t_1)$ for some sufficient small
$t_1>0$, and $(\phi,\psi,\xi)(x,t) $ satisfies
  \begin{eqnarray}\label{3.6}
  \sup \limits_{0 \leq t \leq t_1} \| (\phi,\psi,\xi)(t) \|^2_2 \leq 2 \| (\phi_0,\psi_0,\xi_0) \|^2_2 \,.
\end{eqnarray}
\end{Proposition}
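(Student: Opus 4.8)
The plan is to build the local solution by a linearization--iteration scheme tailored to the mixed hyperbolic--parabolic structure of \eqref{3.3}, and then to close the argument with uniform short-time a priori estimates together with a contraction in a weaker norm. The essential structural observation is that, because $\mu=0$, the $(\phi,\psi)$-block of \eqref{3.3} is purely hyperbolic — a wave-type system for $\phi$ with sound speed $\sqrt{P/v}$ — whereas the $\xi$-equation is uniformly parabolic thanks to the term $\kappa(\xi_x/v)_x$. I therefore decouple the two blocks at each iteration step, lagging the cross-coupling terms.

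First I would set up the iteration. Put $(\phi^0,\psi^0,\xi^0)\equiv(\phi_0,\psi_0,\xi_0)$ and, given the $n$-th iterate, form $v^n=V+\phi^n$, $\theta^n=\Theta+\xi^n$ and the frozen coefficients $p^n=R\theta^n/v^n$, etc. By the embedding $H^2\hookrightarrow C^1$ and the strict positivity $\inf v_0>0$, $\inf\theta_0>0$, these coefficients remain bounded away from $0$ and $\infty$ on a short interval. I would then obtain $(\phi^{n+1},\psi^{n+1})$ by solving the linear (symmetrizable, strictly hyperbolic) system built from the first two equations of \eqref{3.3}, with the coupling $\xi$-term lagged at step $n$ (so that $(R\xi^n/v^n)_x$ enters the source and $P/v^n$ the principal coefficient), and $\xi^{n+1}$ by solving the linear parabolic equation from the third equation of \eqref{3.3}, with diffusion coefficient $\kappa/v^n$ and with the hyperbolic source $p^n\psi^n_x$, the term $(p^n-P)U_x$, and $R_2$ placed on the right-hand side. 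Linear hyperbolic energy theory places $(\phi^{n+1},\psi^{n+1})$ in $C([0,t_1);H^2)$, while linear parabolic theory places $\xi^{n+1}$ in $C([0,t_1);H^2)$ with $\xi^{n+1}_x\in L^2([0,t_1);H^2)$, matching the space $X(0,t_1)$ of \eqref{3.5} exactly.

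Next I would derive the uniform a priori bound. Applying $\pa_x^k$ for $k=0,1,2$ to each linearized equation, testing against $\phi^{n+1},\psi^{n+1},\xi^{n+1}$ and their derivatives, and integrating by parts, the parabolic term yields the good dissipation $\kappa\int(\xi^{n+1}_x)^2/v^n\,dx$ together with its higher-order analogues, while the remaining contributions — the pressure terms, the hyperbolic source $p^n\psi^n_x$, and the inhomogeneities $R_1,R_2$ — are controlled by Gagliardo--Nirenberg and Young's inequalities in terms of $\sup_n\|(\phi^n,\psi^n,\xi^n)\|_2$. A Gronwall argument then shows that, for $t_1$ depending only on $\|(\phi_0,\psi_0,\xi_0)\|_2$ and the fixed wave strength $\delta$, the whole sequence stays in a ball of $X(0,t_1)$ and satisfies $\sup_{[0,t_1]}\|(\phi^{n+1},\psi^{n+1},\xi^{n+1})\|_2^2\le 2\|(\phi_0,\psi_0,\xi_0)\|_2^2$, which passes to the limit to give \eqref{3.6}.

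Finally, convergence and uniqueness. Writing the equations for the differences $(\phi^{n+1}-\phi^n,\ldots)$ and carrying out the same energy estimate, but now only in the lower norm $C([0,t_1);L^2)$ where no derivative is lost from the frozen coefficients, produces a factor that is $o(1)$ as $t_1\to0$, hence a contraction after shrinking $t_1$; thus the iterates form a Cauchy sequence in $C([0,t_1);L^2)$. Interpolation against the uniform $H^2$ bound upgrades this to convergence in $C([0,t_1);H^{s})$ for every $s<2$, and weak-$\ast$ compactness identifies the limit $(\phi,\psi,\xi)\in X(0,t_1)$ as a solution of \eqref{3.3}, \eqref{3.4}; uniqueness follows from the same $L^2$ difference estimate for two solutions with identical data. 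The step I expect to be the main obstacle is not the parabolic block but closing the $H^2$ estimate for the hyperbolic $(\phi,\psi)$-block: with no smoothing available there, the top-order coupling term $p\psi_x$ entering the $\xi$-equation must be absorbed using precisely the parabolic dissipation of $\xi$ without losing a derivative on $\psi$, so the hyperbolic and parabolic energy estimates have to be combined simultaneously rather than performed sequentially.
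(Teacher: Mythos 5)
The paper does not actually prove this proposition: it is stated as a known result and attributed to the reference [Nishihara--Yang--Zhao, SIAM J. Math. Anal. 35 (2004)], so there is no in-paper argument to compare against line by line. Your proposal supplies the standard construction that such citations rest on, and its architecture --- a linearized iteration that respects the hyperbolic--parabolic splitting forced by $\mu=0$, uniform short-time $H^2$ energy bounds closed by Gronwall, contraction of the iterates in a lower norm, and uniqueness from the same difference estimate --- is sound and is essentially how local existence for composite systems of this type is established (Kawashima, Vol'pert--Hudjaev, Majda). What the citation buys the authors is brevity; what your argument buys is a self-contained verification that the scheme really closes in the solution space $X(0,t_1)$ of \eqref{3.5}, including the constant $2$ in \eqref{3.6} via $e^{Ct_1}\le 2$. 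Two refinements are needed to make your sketch airtight, both consequences of the derivative bookkeeping you already flag at the end. First, the uniform bound on the iterates must include the parabolic dissipation norm $\|\xi^n_x\|_{L^2(0,t_1;H^2)}$, not only $\sup_t\|(\phi^n,\psi^n,\xi^n)\|_2$: the lagged coupling term $(R\xi^n/v^n)_x$ appearing as a source in the momentum equation costs three derivatives of $\xi^n$ at the $H^2$ level of the hyperbolic block, and this is recovered only through the $L^2_tH^3$ control of $\xi^n$ supplied by the parabolic smoothing (with a harmless factor $\sqrt{t_1}$ from Cauchy--Schwarz in time). Second, and for the same reason, the contraction cannot be run in $C([0,t_1);L^2)$ alone; the difference norm must carry the term $\int_0^{t_1}\|\partial_x(\xi^n-\xi^{n-1})\|^2\,d\tau$, since the difference of the lagged pressure terms in the momentum equation involves one spatial derivative of $\xi^n-\xi^{n-1}$. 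With the contraction norm enlarged in this way the fixed-point argument closes, so these are adjustments rather than gaps, and your concluding observation --- that the hyperbolic and parabolic energy estimates must be coupled rather than performed sequentially --- is exactly the right diagnosis.
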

Suppose that  $ (\phi,\psi,\xi)(x,t) $ has been extended to the time $t > t_1$, we want to get the following a priori estimates to obtain a global solution.
\begin{Proposition}(A prior estimates)
Under the assumptions stated in Theorem 2.1, there exist positive constants
$\epsilon_2 \leq1$, $\delta_2 \leq \min \{ \delta_1, \overline{\delta}, 1 \}$
and C, such that  $(\phi,\psi,\xi) \in X([0,t])$ for some $t>0$ satisfying
\begin{eqnarray}\label{3.7}
\begin{aligned}
  &N(t) = \sup \limits_{0 \leq \tau \leq t} \| (\phi,\psi,\xi)(\tau) \|_2 \leq \epsilon_2 \, , \\[2mm]
  &\delta = |\theta_- - \theta_+ | < \delta_2 \, ,
\end{aligned}
\end{eqnarray}
it follows the estimate
\begin{eqnarray}\label{3.8}
\begin{aligned}
  &\sup \limits_{0 \leq \tau \leq t} \| (\phi,\psi,\xi)(\tau) \|^2_2 +
  \int^t_0 \big( \| (\phi_x,\psi_x)(\tau) \|^2_1 + \| \xi_x(\tau) \|^2_2 \big) d \tau \\[2mm]
  \lesssim & \| (\phi_0,\psi_0,\xi_0) \|^2_2 + \delta_2 \, .
\end{aligned}
\end{eqnarray}
\end{Proposition}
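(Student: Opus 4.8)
The plan is to establish the a priori bound \eqref{3.8} by a continuation argument: together with the local existence of Proposition 3.1 and the smallness in \eqref{3.7}, the estimate \eqref{3.8} extends the local solution to a global one in the space \eqref{3.5}. The whole difficulty lies in closing the energy estimate, and because $\mu=0$ there is \emph{no} dissipation acting directly on the velocity perturbation $\psi$, so the mechanical dissipation must be extracted from the thermal diffusion through the coupling in \eqref{3.3}. I would organize the argument into three layers: (a) a relative-entropy basic estimate together with its once- and twice-differentiated analogues, controlling $\sup_\tau\|(\phi,\psi,\xi)\|_2^2$ and the full thermal dissipation $\int_0^t\|\xi_x\|_2^2\,d\tau$; (b) two interaction functionals recovering the missing mechanical dissipation $\int_0^t\|(\phi_x,\psi_x)\|_1^2\,d\tau$; and (c) a closing step absorbing all error, nonlinear and wave-interaction contributions by taking $\delta_2,\epsilon_2$ small.

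First I would derive the basic estimate. Multiplying $(\ref{3.3})_1$, $(\ref{3.3})_2$, $(\ref{3.3})_3$ by the multipliers dictated by the relative entropy of the polytropic gas (namely $\psi$ in the momentum equation, a temperature-weighted multiple of $\xi$ in the energy equation, and the matching pressure weight for $\phi$) and integrating, the convexity of the entropy yields a nonnegative energy density and the single good term $\kappa\int\frac{\Theta\,\xi_x^2}{v\theta^2}\,dx$; crucially no $\psi_x$ dissipation appears, which is the manifestation of $\mu=0$. The right-hand side collects the source terms $R_1,R_2$ from \eqref{3.2}, the cross-wave interaction terms, and the rarefaction contribution $(p-P)U_x$. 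These are handled as in \cite{Huang-Li-Matsumura-ARMA-2010}: the monotonicity $U^r_{ix}>0$ in Lemma 2.1(i) provides the good-signed rarefaction term, the exponential separation of the waves in Lemma 2.1(iii) renders the products $Z^r_{ix}Z^c_x$ integrable with an $O(\delta)$ bound, and the term $U^c_t$ inside $R_1$ is controlled via the heat-kernel bound \eqref{2.6}. After integrating in time this gives
\begin{equation*}
\begin{aligned}
&\sup_{0\le\tau\le t}\|(\phi,\psi,\xi)(\tau)\|^2 + \int_0^t\|\xi_x(\tau)\|^2\,d\tau\\
&\qquad\lesssim \|(\phi_0,\psi_0,\xi_0)\|^2 + \delta_2 + (N(t)+\delta_2)\int_0^t\|(\phi_x,\psi_x)\|^2\,d\tau.
\end{aligned}
\end{equation*}
Differentiating \eqref{3.3} once and twice in $x$ and repeating produces the analogous bounds for $\|(\phi,\psi,\xi)\|_2^2$ and the higher thermal dissipation $\int_0^t(\|\xi_{xx}\|^2+\|\xi_{xxx}\|^2)\,d\tau$; this is precisely where the $H^2$ (rather than $H^1$) framework is needed, since the nonlinear terms of the non-dissipative hyperbolic part require one extra derivative to be absorbed by Sobolev embedding.

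The heart of the proof is recovering the mechanical dissipation, and I expect this to be the main obstacle. Using $\phi_t=\psi_x$ from $(\ref{3.3})_1$, the functional $\frac{d}{dt}\int\psi\phi_x\,dx$ together with $(\ref{3.3})_2$ produces $\int\frac{P}{v}\phi_x^2\,dx$ at the cost of $\int\psi_x^2\,dx$ and a controllable $\xi_x$-cross term, giving schematically $\int_0^t\|\phi_x\|^2\lesssim\text{bdry}+\int_0^t\|\psi_x\|^2+\int_0^t\|\xi_x\|^2+\text{err}$. Dually, testing $(\ref{3.3})_3$ against $\psi_x$, integrating by parts in $t$, and using $(\ref{3.3})_2$ to rewrite $\psi_{xt}=\partial_x\psi_t$ isolates $\int p\,\psi_x^2\,dx$, the time derivative of $\int\xi\psi_x\,dx$, and terms bounded by $\int(\xi_x^2+\xi_{xx}^2)\,dx$ plus $\varepsilon\int\phi_x^2\,dx$; this yields $\int_0^t\|\psi_x\|^2\lesssim\text{bdry}+\varepsilon\int_0^t\|\phi_x\|^2+\int_0^t\|\xi_x\|_1^2+\text{err}$. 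Feeding the second inequality into the first and taking $\varepsilon$ small closes both $\int_0^t\|\phi_x\|^2$ and $\int_0^t\|\psi_x\|^2$ in terms of the already-controlled thermal dissipation. Running the same pair of computations on the once-differentiated system upgrades this to $\int_0^t\|(\phi_x,\psi_x)\|_1^2$, the boundary terms $\int\psi\phi_x$, $\int\xi\psi_x$ and their derivatives being absorbed into $\sup_\tau\|(\phi,\psi,\xi)\|_2^2$ with a small prefactor.

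Finally I would combine the three layers. Adding the basic and higher-order energy estimates to the two interaction estimates (the latter multiplied by a small fixed constant so their bad $\int\psi_x^2$ and boundary contributions are dominated by the good terms of the former) produces one inequality whose left-hand side is exactly the quantity in \eqref{3.8} and whose right-hand side is $\|(\phi_0,\psi_0,\xi_0)\|_2^2+\delta_2+C(N(t)+\delta_2)\big(\int_0^t\|(\phi_x,\psi_x)\|_1^2+\int_0^t\|\xi_x\|_2^2\big)$. Under the a priori smallness $N(t)\le\epsilon_2$ and $\delta<\delta_2$ from \eqref{3.7}, choosing $\epsilon_2$ and $\delta_2$ small lets this last term be absorbed into the left-hand side, which gives \eqref{3.8}. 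The only genuinely new difficulty relative to \cite{Huang-Li-Matsumura-ARMA-2010} is the simultaneous closure of the third paragraph, forced by the absence of velocity dissipation; all estimates on the underlying wave pattern are quoted from Lemma 2.1 and the heat-kernel bound \eqref{2.6}.
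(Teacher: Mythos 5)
Your overall architecture coincides with the paper's: a relative-entropy basic estimate (Lemma 3.1), its first- and second-derivative analogues (Lemmas 3.3--3.4) giving $\sup_\tau\|(\phi,\psi,\xi)\|_2^2+\int_0^t\|\xi_x\|_2^2$ modulo $(\delta+N(t))\int_0^t\|(\phi_x,\psi_x)\|_1^2$, and interaction functionals recovering the mechanical dissipation. Your two coupled functionals $\int\psi\phi_x\,dx$ and $\int\xi\psi_x\,dx$ are just an unbundled version of the paper's single functional $\int\big(\tfrac{R}{\gamma-1}\xi\psi_x-\tfrac{P}{2}\phi_x\psi\big)dx$ in Lemma 3.5, which produces $\tfrac{P^2}{2v}\phi_x^2+\tfrac{P}{2}\psi_x^2$ simultaneously; that part of your plan is sound and equivalent.

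There is, however, one genuine gap: you never invoke the weighted heat-kernel estimate \eqref{3.30} (Lemma 3.2, quoted from \cite{Huang-Li-Matsumura-ARMA-2010}), and without it the basic estimate does not close. The contact-wave self-interaction terms in $Q_{12}$ and $Q_2$ (see \eqref{3.24}--\eqref{3.26}), namely $|U^c_x|(\phi^2+\xi^2)$ and $|\Theta^c_x|^2(\phi^2+\xi^2)$, decay only like $\tfrac{\delta}{1+t}e^{-cx^2/(1+t)}$, so the best direct bound is $\delta\int_0^t(1+\tau)^{-1/2}\|(\phi,\xi)\|\,\|(\phi_x,\xi_x)\|\,d\tau$, which is logarithmically divergent in time and cannot be absorbed by smallness of $N(t)$ or $\delta$. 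Your enumeration of the tools borrowed from \cite{Huang-Li-Matsumura-ARMA-2010} --- the sign of $U^r_{ix}$, the exponential wave separation, and the profile bound \eqref{2.6} for $U^c_t$ --- covers the rarefaction terms, the cross terms, and $R_1$ (which carries an extra $(1+t)^{-1/2}$), but not these. The paper's resolution is the auxiliary estimate on $\int_0^t\frac{1}{1+\tau}\int(\phi^2+\psi^2+\xi^2)e^{-cx^2/(1+\tau)}dx\,d\tau$ built from the antiderivative $g$ of the Gaussian $\omega$ with $4\alpha g_t=g_{xx}$, split as \eqref{3.31}--\eqref{3.32}; moreover, since $\mu=0$ one must verify (as the paper remarks after \eqref{3.32}) that this lemma survives without the velocity dissipation term. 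This step must be stated and used explicitly for your closing argument in the final paragraph to be valid.
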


Once Proposition 3.2 is proved, we can extend the unique local solution $(\phi,\psi,\xi)(x,t) $
 obtained in Proposition 3.1 to $t=\infty$, moreover, estimate (\ref{3.8}) implies that
\begin{eqnarray}\label{3.9}
  \int^\infty_0 \big( \| (\phi_x,\psi_x,\xi_x)(t) \|^2 + \big| \frac{d}{dt} \| (\phi_x,\psi_x,\xi_x)(t) \|^2 \big| \big) d \tau
  \lesssim + \infty \, ,
\end{eqnarray}
which together with  Sobolev inequality easily leads to the asymptotic
behavior  (\ref{2.22}), this  concludes the proof of Theorem 2.1.
In the rest of this section, our main task is to show this a prior estimates.

\subsection{A Priori Estimates}
At first, we show the basic estimates.
\begin{Lemma}
Under the assumptions in proposition 3.2, we have
\begin{eqnarray}\label{3.10}
\begin{aligned}
  &\| (\phi,\psi,\xi)(t) \|^2 + \int^t_0 \int_{\mathbb{R}} \big( (|U^r_{1x}|+|U^r_{3x}|) (\phi^2+\xi^2) + \xi^2_x \big) dx d \tau \\[2mm]
  \lesssim & \| (\phi_0,\psi_0,\xi_0) \|^2 + \delta + \delta^{\frac{1}{2}} \int^t_0 \| (\phi_x,\psi_x)(\tau) \|^2 d\tau \\[2mm]
  & + \delta \int^t_0 \frac{1}{1+\tau} \int_{\mathbb{R}} (\phi^2+\xi^2) e^{\frac{-c x^2}{1+\tau}} dx d\tau.
\end{aligned}
\end{eqnarray}
\end{Lemma}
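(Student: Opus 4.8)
The plan is to derive the lowest-order ($L^2$) energy estimate for the perturbation $(\phi,\psi,\xi)$ by constructing an entropy-type energy functional adapted to the system \eqref{3.3}, testing the equations against suitable multipliers, and integrating by parts over $\mathbb{R}\times[0,t]$. Because the system is non-viscous ($\mu=0$), the only direct dissipation comes from the heat conduction term, which will yield the $\int_0^t\|\xi_x\|^2\,d\tau$ on the left; the density/velocity dissipation $\int_0^t\|(\phi_x,\psi_x)\|^2\,d\tau$ cannot be produced at this order, which is exactly why it appears on the right-hand side of \eqref{3.10} multiplied by the small factor $\delta^{1/2}$ (to be absorbed later using higher-order estimates). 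The rarefaction-induced good terms $\int_0^t\int (|U^r_{1x}|+|U^r_{3x}|)(\phi^2+\xi^2)$ arise from the expansive structure of the background waves.

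The key steps, in order, are as follows. First I would introduce the standard relative-entropy energy density
$$
\mathcal{E} := \tfrac{1}{2}\psi^2 + R\Theta\,\Phi\!\left(\tfrac{v}{V}\right) + \tfrac{R}{\gamma-1}\Theta\,\Phi\!\left(\tfrac{\theta}{\Theta}\right),
$$
where $\Phi(z)=z-1-\ln z\ge 0$ is convex with a quadratic minimum at $z=1$, so that $\mathcal{E}\simeq \phi^2+\psi^2+\xi^2$ near equilibrium. Differentiating $\int_{\mathbb{R}}\mathcal{E}\,dx$ in $t$ and using \eqref{3.3}, the principal quadratic terms organize into a time derivative plus the heat dissipation $\kappa\int \frac{\xi_x^2}{vV}$ (modulo lower-order corrections), while the terms involving $U_x = U^r_{1x}+U^c_x+U^r_{3x}$ split into: the genuinely dissipative contributions from the rarefaction derivatives $U^r_{1x},U^r_{3x}>0$ (part (i) of Lemma 2.1), which are the good terms retained on the left of \eqref{3.10}; and the contributions from $U^c_x$ and the wave-interaction error terms $R_1,R_2$. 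Second, I would estimate the error terms $R_1=R^1_1+U^c_t$ and $R_2=R^1_2+R^2_2$ from \eqref{3.2}: using the decay estimates of Lemma 2.1(ii)–(iii) and the viscous-contact-wave bounds \eqref{2.6}, these are controlled by $\delta$ (the $\delta$ term in \eqref{3.10}), by the heat-kernel-weighted term $\frac{\delta}{1+\tau}\int(\phi^2+\xi^2)e^{-cx^2/(1+\tau)}$, and by terms absorbable into the heat dissipation via Cauchy–Schwarz. Third, the cubic and higher nonlinear remainders (coming from expanding $P/v$, $R\xi/v$, etc.) would be bounded using the a priori smallness $N(t)\le\epsilon_2$ from Proposition 3.2 together with Sobolev embedding $\|(\phi,\psi,\xi)\|_{L^\infty}\lesssim N(t)$, so they contribute a factor $\epsilon_2$ or $\delta^{1/2}$ times the $\int_0^t\|(\phi_x,\psi_x)\|^2$ term already flagged on the right.

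The main obstacle I anticipate is the careful treatment of the wave-interaction and background-gradient terms, specifically those carrying $U^c_x$ and the pressure-difference factors $(p_m-P)$, $(P_i-P)$ in $R^1_2$. Unlike the fully viscous case of Huang–Li–Matsumura \cite{Huang-Li-Matsumura-ARMA-2010}, here there is no $\mu u_{xx}$ term to supply coercivity in $\psi_x$, so every estimate that would ordinarily be absorbed into a velocity-gradient dissipation must instead be routed either into the heat dissipation $\xi_x^2$, into the rarefaction good terms, or onto the right-hand side as a $\delta^{1/2}$- or $\delta$-small quantity. The delicate point is ensuring that all cross terms between the contact wave and the two rarefaction waves in the overlap regions $\Omega_c,\Omega_i$ decay like $\delta e^{-c(|x|+t)}$ (Lemma 2.1(iii)), so that their time integrals are uniformly $O(\delta)$; and that the heat-kernel structure of $\widetilde\theta$ from \eqref{2.6} is exploited to keep the weighted term $\frac{\delta}{1+\tau}\int(\phi^2+\xi^2)e^{-cx^2/(1+\tau)}$ in exactly the form stated, rather than an uncontrolled $\int(\phi^2+\xi^2)$. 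This is precisely the place where the heat-kernel estimates of \cite{Huang-Li-Matsumura-ARMA-2010} are invoked to close the argument in the non-viscous setting.
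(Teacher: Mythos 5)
Your proposal follows essentially the same route as the paper: the relative-entropy functional $\tfrac12\psi^2+R\Theta\,\Phi(v/V)+\tfrac{R}{\gamma-1}\Theta\,\Phi(\theta/\Theta)$ with multipliers $-R\Theta(\tfrac1v-\tfrac1V)$, $\psi$, $\xi/\theta$ is exactly the identity (3.18)--(3.20), the split of the $U_x$-terms into the coercive rarefaction part $(|U^r_{1x}|+|U^r_{3x}|)Q_{11}$ with $Q_{11}\gtrsim\phi^2+\xi^2$ versus the contact-wave/interaction part is the decomposition (3.21)--(3.24), and the treatment of $R_1,R_2$ via Lemma 2.1 and (2.6) matches (3.11)--(3.17) and (3.28)--(3.29). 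No substantive difference to report.
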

\begin{proof}
The first step, by using lemma 2.1, we need  get some estimates of $R_1$ and $R_2$. Since $R\Theta^{c}=p_m V^c$,
direct calculation yields that
\begin{eqnarray}\label{3.11}
  R^1_1 & = & R \Big( \frac{\Theta^r_1}{V^r_1} + \frac{\Theta^r_3}{V^r_3} + \frac{\Theta^{cd}}{V^{cd}} - \frac{\Theta}{V} \Big)_x \nonumber\\[2mm]
  & = & R \Theta^r_{1x} ( (V^r_1)^{-1} - V^{-1}) + R \Theta^r_{3x}( (V^r_3)^{-1}-V^{-1} ) \\[2mm]
  && + R \Theta^{cd}_x ( (V^{cd})^{-1} - V^{-1} ) + R V^r_{1x} \Big( \frac{\Theta}{V^2} - \frac{\Theta^r_1}{V^{r2}_1} \Big) \nonumber \\[2mm]
  && + R V^r_{3x} \Big( \frac{\Theta}{V^2} - \frac{\Theta^r_3}{V^{r2}_3} \Big) + R V^{cd}_{x} \Big( \frac{\Theta}{V^2} - \frac{\Theta^{c}}{V^{c2}} \Big). \nonumber
\end{eqnarray}
It is easy to compute
\begin{eqnarray*}
  | \Theta^r_{1x} ((V^r_1)^{-1} - V^{-1}) |
  \lesssim | \Theta^r_{1x} | ( |V^r_{3} - v^m_+| + |V^{c}-v^m_-| )
  \lesssim \delta e^{-c(|x|+t)}
\end{eqnarray*}
and we can treat the other terms on the righthand side of (\ref{3.11}) in the same way to
obtain
\begin{eqnarray}\label{3.12}
  |R^1_1| \lesssim \delta e^{-c(|x|+t)}
\end{eqnarray}
and
\begin{eqnarray}\label{3.13}
  |R_1| \lesssim |R^1_1| + |U^{c}_t| \lesssim \delta e^{-c(|x|+t)} + \tfrac{\delta}{(1+t)^\frac{3}{2}} e^{\frac{-cx^2}{1+t}}.
\end{eqnarray}
Similarly we have
\begin{eqnarray*}
  |R^1_2|  \lesssim \delta e^{-c(|x|+t)}.
\end{eqnarray*}
Since
\begin{eqnarray*}
  R^2_2 = \kappa \Big( \frac{\Theta^r_{1x}}{V} + \frac{\Theta^r_{3x}}{V} \Big)_x
  + \kappa \Big( \frac{\Theta^{cd}_x}{V} - \frac{\Theta^{cd}_x}{V^{cd}} \Big)_x := R^2_{21} + R^2_{22}
\end{eqnarray*}
and
\begin{eqnarray}\label{3.14}
  R^2_{21} & \lesssim & \Big( |\Theta^r_{1xx}| + |\Theta^r_{3xx}| + |\Theta^r_{1x}| |V^r_{1x}| + |\Theta^r_{3x}| |V^r_{3x}| \Big)\\[2mm]
  && + |\Theta^r_{1x}| (|V^r_{3x}| + |V^{cd}_x|) + |\Theta^r_{3x}| (|V^r_{1x}| + |V^{cd}_x|), \nonumber
\end{eqnarray}
it follows from (\ref{2.6}) and Lemma 2.1 that
\begin{eqnarray}\label{3.15}
\begin{aligned}
  |R^2_{21}| \lesssim & \delta^\frac{1}{8} (1+t)^{-\frac{7}{8}},\\[2mm]
  |R^2_{22}| \lesssim & \Big( |\Theta^{c}_{xx}| + |\Theta^{c}_{x}| |V^{c}_x| \Big) \Big( |V^r_3-v^m_3| + |V^r_1-v^m_-| \Big) \\[2mm]
  & + |\Theta^{c}_{x}| (|V^r_{1x}| + |V^r_{3x}|)
  \lesssim \delta e^{-c(|x|+t)}.
\end{aligned}
\end{eqnarray}
Thus we get
\begin{eqnarray}\label{3.16}
  |R_2| \lesssim \delta^\frac{1}{8} (1+t)^{-\frac{7}{8}}\, .
\end{eqnarray}
And by the direct calculation, we can also obtain
\begin{eqnarray}\label{3.17}
\begin{aligned}
  & |(R_{1x}, R_{1xx})(x,t)| \lesssim \delta e^{-c(|x|+t)} + \frac{\delta}{(1+t)^\frac{3}{2}} e^{\frac{-cx^2}{1+t}}, \\[2mm]
  &|(R_{2x}, R_{2xx})(x,t)| \lesssim \delta^\frac{1}{8} (1+t)^{-\frac{7}{8}}.
\end{aligned}
\end{eqnarray}
Then multiplying $(\ref{3.3})_1$ by $-R\Theta \big(\frac{1}{v}-\frac{1}{V}\big)$,
 $(\ref{3.3})_2$ by $\psi$ and $(\ref{3.3})_3$ by $\frac{\xi}{\theta}$, respectively,
and adding the results together, we get
\begin{eqnarray}\label{3.18}
\begin{aligned}
  &\bigg\{ R \Theta \Phi \Big( \frac{v}{V} \Big) + \tfrac{1}{2} \psi^2 + \tfrac{R\Theta}{\gamma-1} \Phi \Big( \frac{\theta}{\Theta} \Big) \bigg\}_t + \frac{\kappa}{v\theta} \xi^2_x + H_{1x} + Q_1 + Q_2 \\[2mm]
  = & R_1 \psi + R_2 \frac{\xi}{\theta},
  \end{aligned}
\end{eqnarray}
where
\begin{eqnarray}\label{3.19}
\begin{aligned}
  H_1& := (p-P) \psi - \frac{\kappa\xi}{\theta} \Big( \frac{\xi_x}{v} - \frac{\Theta_x\phi}{vV} \Big), \\[2mm]
  Q_1& := - R \Theta_t \Phi \Big( \frac{v}{V} \Big) + \frac{P U_x}{v V} \phi^2 + \frac{R \Theta_t}{\gamma-1} \Phi \Big( \frac{\Theta}{\theta} \Big) + \frac{\xi}{\theta} (p-P) U_x,\\[2mm]
  Q_2& := - \kappa \frac{\theta_x}{\theta^2 v} \xi \xi_x - \kappa \frac{\xi_x \phi}{\theta v V} \Theta_x + \kappa \frac{\theta_x \xi \phi}{\theta^2 v V} \Theta_x,
 \end{aligned}
\end{eqnarray}
here
$$\Phi(s)=s-1-\ln s.$$
In the process of the calculation, we have used the following equalities
\begin{eqnarray}\label{3.20}
\begin{aligned}
  - R \Theta \Big( \frac{1}{v} - \frac{1}{V} \Big) \phi_t
  & = \bigg\{ R \Theta \Phi \Big( \frac{v}{V} \Big) \bigg\}_t + \frac{P U_x}{v V} \phi^2 - R \Theta_t \Phi \Big( \frac{v}{V} \Big), \\[2mm]
  \frac{R \xi_t}{\gamma-1} \frac{\xi}{\theta}
  & = \bigg\{ \frac{R \Theta}{\gamma-1} \Phi \Big( \frac{\theta}{\Theta} \Big) \bigg\}_t + \frac{R \Theta_t}{\gamma-1} \Phi \Big( \frac{\Theta}{\theta} \Big)
\end{aligned}
\end{eqnarray}
and the equations in (\ref{3.3}).

Since the composite waves affected each other, we will try to divide it the viscous contact wave $Z^c$ and
 viscous rarefaction wave $Z^r_i(i=1,3)$,
\begin{eqnarray}\label{3.21}
\begin{aligned}
  - R \Theta_t = & (\gamma-1) (P^r_1 U^r_{1x} + P^r_3 U^r_{3x} ) + (\gamma-1)\bigg( p^m U^{c}_{x} - \kappa \Big( \frac{\Theta^{c}_{x}}{V^c} \Big)_x \bigg) \\[2mm]
  = & (\gamma-1) P ( U^r_{1x} + U^r_{3x} ) + (\gamma-1) (P^r_1-P) U^r_{1x} \\[2mm]
  & + (\gamma-1) (P^r_3-P) U^r_{3x} + (\gamma-1) \bigg( p^m U^{c}_{x} - \kappa \Big( \frac{\Theta^{c}_{x}}{V^c} \Big)_x \bigg),
\end{aligned}
\end{eqnarray}
thus

\begin{eqnarray}\label{3.22}
\begin{aligned}
  Q_1 = & - R \Theta_t \Phi \Big( \frac{v}{V} \Big) + \frac{R \Theta_t}{\gamma-1} \Phi \Big( \frac{\Theta}{\theta} \Big) + \frac{ P (U_{1x} + U_{3x} + U^c_{x}) }{v V} \phi^2 \\[2mm]
  & + \frac{\xi}{\theta} (p-P) ( U_{1x}+U_{3x}+U^c_{x} ) \\[2mm]
  = &: ( |U^r_{1x}| + |U^r_{3x}| ) Q_{11} + Q_{12},
\end{aligned}
\end{eqnarray}
where
\begin{eqnarray}\label{3.23}
\begin{aligned}
  Q_{11} := & (\gamma-1) P \Phi \Big( \frac{v}{V} \Big) - P \Phi \Big( \frac{\Theta}{\theta} \Big) + \frac{P}{v V} \phi^2 + \frac{\xi}{\theta} (p-P) \\[2mm]
  = & P \bigg( \frac{\theta V}{\Theta v} - 1 + \gamma \Big( \frac{\theta}{\Theta} \Big) -
  \Big( \log \frac{\theta}{\Theta} + (\gamma-1) \log \frac{v}{V} \Big) \bigg) \\[2mm]
  = & P \bigg( \Phi \Big( \frac{\theta V}{\Theta v} \Big) + \gamma \Phi \Big( \frac{v}{V} \Big) \bigg) \geq C ( \phi^2 + \xi^2 )
\end{aligned}
\end{eqnarray}
and
\begin{eqnarray}\label{3.24}
\begin{aligned}
  Q_{12} := & U^{c}_{x} \bigg( \frac{P \phi^2}{v V} + (\gamma-1) p_m \Phi \Big( \frac{v}{V} \Big) + p_m \Phi \Big( \frac{\Theta}{\theta} \Big) + \frac{\xi}{\theta} (p-P) \bigg) \\[2mm]
  & + (P^r_1-P) U^r_{1x} \Big[ (\gamma-1) \Phi \Big( \frac{ v}{V} \Big) - \Phi \Big( \frac{v}{V} \Big) \Big] \\[2mm]
  & - \kappa \Big( \frac{\Theta^{c}_{x}}{V^c} \Big)_x \Big[ (\gamma-1) \Phi \Big( \frac{v}{V} \Big) - \Phi \Big( \frac{v}{V} \Big) \Big ] \\[2mm]
  & + (P^r_3-P) U^r_{3x} [ (\gamma-1) \Phi \Big( \frac{v}{V} \Big) - \Phi \Big( \frac{v}{V} \Big) \Big] \\[2mm]
  \lesssim & | (U^{c}_{x}, \Theta^{c}_{xx}, \Theta^{c}_{x} V^{c}_{x}) | (\phi^2+\xi^2) + \delta (U^r_{1x} + U^r_{3x}) (\phi^2 + \xi^2) \, .
\end{aligned}
\end{eqnarray}
Meanwhile,
\begin{eqnarray}\label{3.25}
  |Q_2| \lesssim \big( N(t) + \delta + \tfrac{1}{8} \big) \xi^2_x + \Theta^2_{x} \big( \phi^2 + \xi^2 \big),
\end{eqnarray}
where
\begin{eqnarray}\label{3.26}
\begin{aligned}
\Theta^2_{x}
\lesssim & ({\Theta^r_{1x}}^2+{\Theta^r_{3x}}^2)+({\Theta^c_x}^2)\\[2mm]
\lesssim & {\Theta^r_{1x}}^2+{\Theta^r_{3x}}^2+
\frac{\delta}{1+t}e^{\frac{-cx^2}{1+t}} \, .
\end{aligned}
\end{eqnarray}
Therefore, after integrating (\ref{3.19}) on $[0, t] \times \mathbb{R}$ and using the above estimates, we obtain
\begin{eqnarray}\label{3.27}
\begin{aligned}
  & \| (\phi,\psi,\xi)(t) \|^2 + \int^t_0 \int_{\mathbb{R}} \big( ( |U^r_{1x}| + |U^r_{3x}| ) (\phi^2+\xi^2) + \xi^2_x \big) dx d\tau \\[2mm]
  \lesssim & \| (\phi_0,\psi_0,\xi_0) \|^2 + \int^t_0 \frac{\delta}{1+\tau} \int_{\mathbb{R}} (\phi^2+\xi^2) e^{ \frac{-cx^2}{1+\tau} } dx d\tau \\[2mm]
  & + \int^t_0 \int_{\mathbb{R}} ( {\Theta^r_{1x}}^2 + {\Theta^r_{3x}}^2 ) (\phi^2+\xi^2) dx d\tau
  + \int^t_0 \int_{\mathbb{R}} ( |\psi||R_1| +  |\xi||R_2| ) dx d\tau.
\end{aligned}
\end{eqnarray}
Noticing that $\| (\Theta^r_{1x}, \Theta^r_{3x} ) \|_{\infty} \lesssim \delta^\frac{1}{8} (1+t)^{-\frac{7}{8}}$, one can easily get
\begin{eqnarray}\label{3.28}
  && \int^t_0 \int_{\mathbb{R}} ( {\Theta^r_{1x}}^2 + {\Theta^r_{3x}}^2 ) ( \phi^2+\xi^2 ) dx d\tau \nonumber\\[2mm]
  & \lesssim & \delta^\frac{1}{4} \int^t_0 \| (\phi,\xi) \|^2_{\infty} (1+\tau)^{-\frac{7}{4}} d\tau \\[2mm]
  & \lesssim & \delta^\frac{1}{4} \int^t_0 \| (\phi,\xi) \| \| (\phi_x,\xi_x) \| (1+\tau)^{-\frac{7}{4}} d\tau \nonumber \\[2mm]
  & \lesssim & \delta^\frac{1}{4} \int^t_0 \| (\phi_x,\xi_x) \|^2 d\tau + \delta^\frac{1}{4} \int^t_0 \| (\phi,\xi) \|^2 (1+\tau)^{-\frac{7}{2}} d\tau \nonumber \\[2mm]
  & \lesssim & \delta^\frac{1}{4} \int^t_0 \| (\phi_x,\xi_x) \|^2 d\tau + \delta^\frac{1}{4}. \nonumber
\end{eqnarray}
For the last term in (\ref{3.27}), we have the following estimate
\begin{eqnarray}\label{3.29}
  && \int^t_0 \int_{\mathbb{R}} ( |\psi||R_1|+  |\xi||R_2| ) dx d\tau \nonumber \\[2mm]
  & \lesssim & \delta \int^t_0 \| \psi\ |_{\infty} ( \int_{\mathbb{R}} e^{-c(|x|+\tau)} dx + \int_{\mathbb{R}} \frac{\delta}{(1+\tau)^\frac{3}{2}} e^{\frac{-cx^2}{1+\tau}} dx ) d\tau  \nonumber \\[2mm]
  && + \delta^\frac{1}{8} \int^t_0 \| \xi \|_{\infty} (1+\tau)^{-\frac{7}{8}} d\tau \\[2mm]
  & \lesssim  & \delta \int^t_0 \| \psi \|^\frac{1}{2} \| \psi_x \|^\frac{1}{2} (1+\tau)^{-1} d\tau
  + \delta^\frac{1}{8}\int^t_0 \| \xi \|^\frac{1}{2} \| \xi_x \|^\frac{1}{2} (1+\tau)^{-\frac{7}{8}} d\tau \nonumber \\[2mm]
  & \lesssim & \delta \int^t_0 \| (\psi_x,\xi_x) \|^2 d\tau + \delta^\frac{1}{8} \int^t_0 \| (\psi,\xi) \|^\frac{2}{3} (1+\tau)^{-\frac{7}{6}} d\tau \nonumber\\[2mm]
  & \lesssim &  \delta \int^t_0 \| (\psi_x,\xi_x) \|^2 d\tau + \delta. \nonumber
\end{eqnarray}
Inserting (\ref{3.28}) and (\ref{3.29}) into (\ref{3.27}), we can get (\ref{3.10}) and this  completes the proof of Lemma 3.1.
\end{proof}
Then we claim that there exists some positive constant $C$ such that
\begin{Lemma}(\cite{Huang-Li-Matsumura-ARMA-2010})
Under the assumptions in proposition 3.2, we have
\begin{eqnarray}\label{3.30}
\begin{aligned}
  & \int^t_0 \frac{1}{1+\tau} \int_{\mathbb{R}} ( \phi^2 + \psi^2 + \xi^2 ) e^{-\frac{cx^2}{1+\tau}} dx d\tau \\[2mm]
  \lesssim & 1 + \int^t_0 \| (\phi_x,\psi_x,\xi_x) \|^2 d\tau + \int^t_0 \int_{\mathbb{R}} ( |U^r_{1x}| + |U^r_{3x}| ) (\phi^2+\xi^2) dx d\tau.
\end{aligned}
\end{eqnarray}
\end{Lemma}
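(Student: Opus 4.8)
The plan is to establish \eqref{3.30} by a weighted energy estimate anchored on the heat kernel, in the spirit of Huang--Li--Matsumura \cite{Huang-Li-Matsumura-ARMA-2010}; indeed this is precisely the place where the ``estimates on the heat kernel function'' advertised in the introduction enter. First I would fix the Gaussian weight and record its exact properties. Writing $w(x,t)=(1+t)^{-1/2}e^{-\frac{x^2}{4a(1+t)}}$ with $a$ chosen so that $c=\frac{1}{4a}$, one has the heat relation $w_t=a\,w_{xx}$, the mass conservation $\int_{\mathbb{R}}w\,dx=\sqrt{4\pi a}$, the identity $w_x=-\frac{x}{2a(1+t)}w$, and the integrated bound $\int_{\mathbb{R}}\frac{w_x^2}{w}\,dx\lesssim\frac{1}{1+t}$. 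Since $e^{-\frac{cx^2}{1+t}}=(1+t)^{1/2}w$, the left-hand side of \eqref{3.30} equals $\int_0^t(1+\tau)^{-1/2}\int_{\mathbb{R}}(\phi^2+\psi^2+\xi^2)\,w\,dx\,d\tau$, so the whole task is to control these weighted zeroth-order quantities by the first-order norms $\int_0^t\|(\phi_x,\psi_x,\xi_x)\|^2\,d\tau$ kept on the right of \eqref{3.30}.

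Before the real estimate I would stress why a crude interpolation is not enough: bounding $\|g\|_\infty^2\lesssim\|g\|\,\|g_x\|$ and using $\int_{\mathbb{R}}w\,dx=O(1)$ turns the weighted term into $\int_0^t(1+\tau)^{-1/2}\|g\|\,\|g_x\|\,d\tau$, whose $\|g\|^2$-part costs a factor $(1+t)^{1/2}$ and diverges. Hence one must extract genuine cancellation from the equations \eqref{3.3} rather than from a Sobolev embedding.

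The core step is then to multiply the three equations in \eqref{3.3} by the weighted multipliers $\phi\,w$, $\psi\,w$ and $\tfrac{\xi}{\theta}\,w$ (and, for the non-dissipative components, by antiderivative-type multipliers built from $w$), and to integrate over $\mathbb{R}\times[0,\tau]$. The temperature contribution is benign: the heat-conduction term supplies a good $\int\frac{\xi_x^2}{v\theta}w$ after one integration by parts, so $\int(1+\tau)^{-1/2}\int\xi^2 w$ is absorbed directly. The real difficulty is the velocity term $\int_0^t(1+\tau)^{-1/2}\int\psi^2 w$, since in the non-viscous case $\psi$ carries no dissipation (and the same applies to $\phi$). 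Here I would proceed exactly as in \cite{Huang-Li-Matsumura-ARMA-2010}: use the continuity equation $\phi_t=\psi_x$ and the momentum equation $\psi_t+(\frac{R\xi}{v}-\frac{P\phi}{v})_x=R_1$, together with $w_t=a\,w_{xx}$, so that after integration by parts in $x$ and $t$ the weighted zeroth-order quantities are converted into first-order terms $\int(\phi_x^2+\psi_x^2)w$, remainders carrying the factor $w_x$ that are absorbed via $\int\frac{w_x^2}{w}\lesssim(1+t)^{-1}$ and Young's inequality, and a total time derivative $\frac{d}{d\tau}\int(\cdots)\,dx$ whose endpoint values are controlled by the a priori bound $N(t)\le\epsilon_2$ from Proposition~3.2. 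This cancellation is what defeats the logarithmic loss flagged above, and it is the step I expect to be the main obstacle.

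Finally I would account for the forcing and wave-interaction terms. The source bounds \eqref{3.13}, \eqref{3.16} and \eqref{3.17} show that, tested against the weight, $R_1$ and $R_2$ contribute integrals of the type $\delta\int_0^t(1+\tau)^{-s}\,d\tau$ with $s>1$ together with exponentially localized $\delta\,e^{-c(|x|+\tau)}$ pieces; all of these are bounded by an absolute constant and hence folded into the ``$1$'' on the right of \eqref{3.30}. The rarefaction-interaction terms weighted by $U^r_{1x}$ and $U^r_{3x}$ are exactly those displayed on the right-hand side of \eqref{3.30}, so they are simply retained. Collecting the three components and choosing the weight constant $c$ compatibly with the constant in Lemma~2.1 and \eqref{2.6} yields \eqref{3.30}.
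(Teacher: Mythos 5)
You have correctly identified the right tool (the Gaussian weight $\omega$, its antiderivative $g$, and the necessity of using the equations rather than Sobolev interpolation), and your diagnosis of why the crude $L^\infty$ interpolation fails is exactly right. However, your description of the core step is not the argument the paper actually runs, and as written it would not close. The paper's proof of \eqref{3.30} is split into the two estimates \eqref{3.31} and \eqref{3.32}: one for the ``acoustic'' pair $(R\xi-P\phi,\psi)$ and one for the ``entropy'' combination $R\xi+(\gamma-1)P\phi$; since these two linear combinations of $(\phi,\xi)$ are independent, the full quadratic form $\phi^2+\psi^2+\xi^2$ is recovered afterwards. This diagonalization is not cosmetic. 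The heat-kernel machinery reduces $\int_0^t\!\!\int f^2\omega^2\,dx\,d\tau$ to controlling $\int_0^t\!\!\int f f_\tau g^2\,dx\,d\tau$, and for the raw variables the resulting zeroth-order couplings are $O(1)$ with no small factor: for instance $\phi_t=\psi_x$ gives $\int\phi\psi_x g^2\,dx=-\int\phi_x\psi g^2\,dx-2\int\phi\psi\,\omega g\,dx$, and the last term is of exactly the same size as the quantities being estimated. Only for $R\xi+(\gamma-1)P\phi$ does the $\psi_x$-coupling cancel between the mass and energy equations in \eqref{3.3}, and only for the pair $(R\xi-P\phi,\psi)$ do the surviving $O(1)$ couplings have the antisymmetric (conservative) structure that turns them into exact derivatives. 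Your multiplier scheme $\phi w$, $\psi w$, $\xi w/\theta$ does not see this structure, so the dangerous cross terms remain.

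A second, related gap: your claim that the temperature contribution is ``benign'' because the heat-conduction term supplies $\int\xi_x^2 w$ is false, for the same scaling reason you yourself flagged. There is no weighted Poincar\'e inequality on $\mathbb{R}$ converting $\int\xi_x^2\omega\,dx$ into $\frac{1}{1+t}\int\xi^2 e^{-cx^2/(1+t)}dx$; the direct bound $\int\xi^2\omega^2\,dx\lesssim(1+t)^{-1/2}\|\xi\|\,\|\xi_x\|$ still produces a logarithmically divergent time integral. In the actual proof the dissipation enters only through the term $\int f f_\tau g^2$ for $f=R\xi+(\gamma-1)P\phi$, where $f_\tau$ is (to leading order) a perfect $x$-derivative of the heat flux, and the resulting $2\int f g\,\omega\,\xi_x$ is absorbed with a small parameter $\eta$ --- which is precisely why $\eta$ appears in \eqref{3.32}. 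With the decomposition \eqref{3.31}--\eqref{3.32} supplied, the remainder of your outline (treatment of $R_1,R_2$ via \eqref{3.13} and \eqref{3.16}, retention of the $|U^r_{ix}|$ terms) is consistent with the paper.
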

For $\alpha>0$, we define the following heat kernel $\omega(x,t)$ which will play an essential role in the later estimates. We define
\begin{eqnarray*}
  \omega(x,t) = (1+t)^{-\frac{1}{2}} e^{-\frac{\alpha x^2}{1+t}} \, , \quad
  g(x,t) = \int_{-\infty}^x \omega(y,t) dy \, .
\end{eqnarray*}
It is easy to check that
\begin{eqnarray*}
  4 \alpha g_t = g_{xx} \, , \quad
  \| g(\cdot,t) \|_{L^\infty} = \sqrt{\pi} \alpha^{-\frac{1}{2}}\, .
\end{eqnarray*}
The proof (\ref{3.30}) is divided into two parts:
\begin{eqnarray}\label{3.31}
\begin{aligned}
  & \int^t_0 \int_{\mathbb{R}} \omega^2 ( (R\xi-P\phi)^2 + \psi^2 ) dx d\tau \\[2mm]
  \lesssim & 1 + \int^t_0 \| ( \phi_x,\psi_x,\xi_x ) (\tau) \|^2 d\tau + \delta \int^t_0 \int_{\mathbb{R}} \omega^2 (\phi^2+\psi^2+\xi^2) dx d\tau \\[2mm]
  & + \delta \int^t_0 \int_{\mathbb{R}} ( |U^r_{1x}| + |U^r_{3x}| ) ( \phi^2+\psi^2+\xi^2 ) dx d\tau
\end{aligned}
\end{eqnarray}
and for any $\eta>0$,
\begin{eqnarray}\label{3.32}
\begin{aligned}
  & \int^t_0 \int_{\mathbb{R}} \omega^2 ( R\xi+(\gamma-1)P\phi )^2 dx d\tau \\[2mm]
  \lesssim & 1 + \int^t_0 \| (\phi_x,\psi_x,\xi_x) (\tau) \|^2 d\tau + (\delta+\eta) \int^t_0 \int_{\mathbb{R}} \omega^2 (\phi^2+\psi^2+\xi^2) dx d\tau, \\[2mm]
  & + \delta \int^t_0 \int_{\mathbb{R}} ( |U^r_{1x}| + |U^r_{3x}| ) (\phi^2+\psi^2+\xi^2) dx d\tau  \, .
\end{aligned}
\end{eqnarray}
Here we used the same method as in \cite{Huang-Li-Matsumura-ARMA-2010}, the only difference lied in the
loss of diffusion term of fluid velocity, but that's not the main term. We omit the detail for simplify.

We now turn to obtain the higher order estimates, further calculation yields the following Lemma

\begin{Lemma}
Under the assumptions in proposition 3.2, we have
\begin{eqnarray}\label{3.33}
\begin{aligned}
&\|(\phi_x,\psi_x,\xi_x)(t)\|^2+
\int^t_0\|\xi_{xx}(\tau)\|^2d\tau \\[2mm]
\lesssim &\|(\phi_{0x},\psi_{0x},\xi_{0x})\|^2+\delta+(\delta+N(t)) \int^t_0\|(\phi_x,\psi_x)(\tau)\|^2_1d\tau \, .
\end{aligned}
\end{eqnarray}
\end{Lemma}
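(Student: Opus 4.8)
The plan is to derive a first-order energy estimate by differentiating the reformed system (\ref{3.3}) once in $x$ and testing with carefully chosen multipliers. Writing the differentiated equations as $\phi_{xt} = \psi_{xx}$, $\psi_{xt} + (p-P)_{xx} = R_{1x}$, and $\frac{R}{\gamma-1}\xi_{xt} + (p\psi_x)_x + ((p-P)U_x)_x = \kappa(\frac{\xi_x}{v} - \frac{\Theta_x\phi}{vV})_{xx} + R_{2x}$, I would multiply the momentum equation by $\psi_x$ and the energy equation by $\frac{\xi_x}{\theta}$, then integrate over $\mathbb{R}\times[0,t]$. Since $\mu=0$ there is no dissipation for $\psi_x$, so to control $\|\phi_x\|^2$ pointwise in time I rely on the pressure term: integrating $(p-P)_{xx}\psi_x$ by parts and substituting $\psi_{xx}=\phi_{xt}$ from the continuity equation, the leading piece $\int\frac{P}{v}\phi_x\phi_{xt}$ produces $\frac{d}{dt}\int\frac{P}{2v}\phi_x^2$, which together with $\frac12\frac{d}{dt}\|\psi_x\|^2$ and $\frac{R}{2(\gamma-1)}\frac{d}{dt}\int\frac{\xi_x^2}{\theta}$ forms an energy functional equivalent to $\|(\phi_x,\psi_x,\xi_x)\|^2$, using the uniform upper and lower bounds on $v,\theta$ guaranteed by $N(t)$ small.

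The single genuine dissipation comes from the heat-conduction term: after one integration by parts, $\kappa\int(\frac{\xi_x}{v})_{xx}\frac{\xi_x}{\theta}$ yields $\kappa\int\frac{\xi_{xx}^2}{v\theta}$, the source of $\int_0^t\|\xi_{xx}\|^2$ on the left of (\ref{3.33}). The decisive point, and the reason the estimate closes with only this one dissipation, is a cancellation of the dangerous cross terms coupling $\psi_x$ to $\xi_{xx}$. The momentum side produces $+\int\frac{R}{v}\xi_{xx}\psi_x$ (from the $\frac{R}{v}\xi_x$ part of $(p-P)_x$, after integrating by parts twice), while the energy side produces $-\int\frac{p}{\theta}\psi_x\xi_{xx}$ (from $(p\psi_x)_x\frac{\xi_x}{\theta}$). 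Because $\frac{p}{\theta}=\frac{R}{v}$ by the state equation, these cancel exactly; this cancellation substitutes for the absent viscous dissipation and is what makes the non-viscous argument work.

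For the right-hand side I would estimate the inhomogeneities $R_{1x},R_{2x}$ via (\ref{3.17}): the exponentially localized pieces integrate to $O(\delta)$, and the algebraically decaying part $R_{2x}\lesssim\delta^{1/8}(1+t)^{-7/8}$ is absorbed after Cauchy--Schwarz and the Sobolev inequality $\|f\|_\infty\lesssim\|f\|^{1/2}\|f_x\|^{1/2}$, contributing the $+\delta$ term. All remaining terms are lower order: the coefficients $(\frac{P}{v})_t$ and $(\frac1\theta)_t$, the interaction term $((p-P)U_x)_x\frac{\xi_x}{\theta}$, the sub-leading pieces of $(p-P)_x\psi_{xx}$, and the non-leading parts of the dissipation involving $\Theta_x$ and $\phi$. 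Each carries either a wave-strength factor $\delta$ (through the decay bounds of Lemma 2.1 and (\ref{2.6})) or a perturbation factor $N(t)$ (through Sobolev embedding), so after Young's inequality they are bounded by $(\delta+N(t))\int_0^t\|(\phi_x,\psi_x)\|_1^2$, with the residual $\eta\|\xi_{xx}\|^2$ absorbed into the dissipation on the left by choosing $\eta$ small. The appearance of the full $H^1$ norm $\|(\phi_x,\psi_x)\|_1$ on the right reflects that bounding cubic terms such as $\int\xi\phi_x\psi_{xx}$ legitimately forces $\|\psi_{xx}\|^2$ and $\|\phi_{xx}\|^2$ to appear, but only with the small coefficient $N(t)$.

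The main obstacle is exactly the lack of momentum dissipation: with $\mu=0$ the first-order estimate has no $\|\psi_{xx}\|^2$ or $\|\phi_{xx}\|^2$ coercivity, so nothing can be absorbed into a velocity dissipation, and one must verify the $\frac{R}{v}\psi_x\xi_{xx}$ cancellation precisely and confirm that every surviving cross term multiplying the sole dissipation $\xi_{xx}$ is genuinely small. This is why (\ref{3.33}) does not claim dissipation for $(\phi_x,\psi_x)$ but only pointwise-in-time control together with the $\xi_{xx}$ dissipation, deferring the $(\delta+N(t))\int_0^t\|(\phi_x,\psi_x)\|_1^2$ term to be absorbed later, once the velocity and density dissipation is recovered by a separate argument and the smallness of $\delta+N(t)$ is exploited.
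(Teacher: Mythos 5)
Your overall architecture coincides with the paper's: the same multipliers (testing the once-differentiated momentum and energy equations with $\psi_x$ and $\xi_x/\theta$, and producing the $\tfrac{P}{2v}\phi_x^2$ part of the energy either from the differentiated continuity equation multiplied by $\tfrac{P}{v}\phi_x$, as the paper does, or equivalently by substituting $\psi_{xx}=\phi_{xt}$ into the pressure term), the same single dissipation $\kappa\int \xi_{xx}^2/(v\theta)$, the same cancellation of the $\tfrac{R}{v}\psi_x\xi_{xx}$ cross terms via $p/\theta=R/v$, and the same treatment of $R_{1x}$, $R_{2x}$ through (\ref{3.17}). This reproduces the structure of (\ref{3.34})--(\ref{3.36}).

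There is, however, one step that fails as you state it. Among the ``lower order'' terms you dismiss is $\int_0^t\int_{\mathbb{R}}(|\Theta_x|+|\Theta_{xx}|)^2(\phi^2+\xi^2)\,dx\,d\tau$, which arises from $(\tfrac{R}{v})_x\xi\psi_{xx}$, $(\tfrac{P}{v})_x\phi\psi_{xx}$ and the $\Theta_x\phi$ part of the heat flux. For the rarefaction components this is indeed handled by Sobolev and Young as in (\ref{3.28}), since $\|\Theta^r_{ix}\|_\infty^2\lesssim\delta^{1/4}(1+\tau)^{-7/4}$ is integrable in time. But for the viscous contact wave one only has $|\Theta^c_x|^2\lesssim\delta^2(1+\tau)^{-1}e^{-cx^2/(1+\tau)}$, so the naive bound $\|\Theta^c_x\|_\infty^2\|(\phi,\xi)\|^2\lesssim \delta^2 N(t)^2(1+\tau)^{-1}$ has a logarithmically divergent time integral, and interpolating $\|\phi\|_\infty\lesssim\|\phi\|^{1/2}\|\phi_x\|^{1/2}$ does not repair this. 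Hence this contribution is \emph{not} bounded by $(\delta+N(t))\int_0^t\|(\phi_x,\psi_x)\|_1^2\,d\tau$ by Young's inequality alone. The paper closes exactly this step by invoking the weighted heat-kernel estimate of Lemma 3.2, inequality (\ref{3.30}), which converts $\delta\int_0^t(1+\tau)^{-1}\int(\phi^2+\xi^2)e^{-cx^2/(1+\tau)}\,dx\,d\tau$ into quantities already controlled by Lemma 3.1; your proposal never appeals to this lemma, and without it the first-order estimate does not close.
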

\begin{proof}
Multiplying $(\ref{3.3})_{1x}$ by $\frac{P}{v}\phi_x$, $(\ref{3.3})_{2x}$ by $\psi_x$ and $(\ref{3.3})_{3x}$ by $\frac{\xi_x}{\theta}$, respectively, and adding the resulting equations together, then we have
\begin{eqnarray}\label{3.34}
\begin{aligned}
  & \bigg\{ \frac{P}{2v} \phi_x^2 + \frac{\psi^2_x}{2} + \frac{R \xi^2_x}{2(\gamma-1)\theta} \bigg\}_t
  + \frac{\kappa}{v\theta} \xi^2_{xx} + H_{2x} + J_2 \\[2mm]
  = & R_{1x} \psi_{x} + R_{2x} \frac{\xi_{x}}{\theta} \, ,
 \end{aligned}
\end{eqnarray}
where
\begin{eqnarray}\label{3.35}
\begin{aligned}
  H_2 = & (p-P)_x \psi_x + \frac{\xi_x}{\theta} \bigg( (p-P) U_x + \Big( \frac{\kappa\xi_x}{v} - \frac{\kappa \Theta_x \phi}{vV} \Big)_x \bigg) \, , \\[2mm]
  J_2 = & \Big( \frac{\xi_x}{\theta} \Big)_x \bigg( \Big( \frac{\kappa \xi_x}{v} - \frac{\kappa \Theta_x \phi}{v V} \Big)_x - (p-P) U_x \bigg) - \frac{\kappa}{v \theta} \xi^2_{xx} -  \Big( \frac{P}{2v} \Big)_t \phi^2_x \\[2mm]
  & - \frac{R}{2 (\gamma-1)} \Big( \frac{1}{\theta} \Big)_t \xi^2_x - \Big( \frac{R}{v} \Big)_x \xi \psi_{xx} + \Big( \frac{P}{v} \Big)_x \phi \psi_{xx} + p_x \psi_x \tfrac{\xi_x}{\theta} \\[2mm]
  = & O(1) \big( N(t) + \delta + \eta \big) | (\phi_x,\xi_x,\psi_{xx},\xi_{xx}) |^2 + | (V_x,U_x,\Theta_x,\Theta_{xx}) |^2 (\phi^2+\xi^2)\, .
\end{aligned}
\end{eqnarray}
After integrating (\ref{3.34}) on $[0, t] \times \mathbb{R}$, we get
\begin{eqnarray}\label{3.36}
\begin{aligned}
  & \| (\phi_x,\psi_x,\xi_x)(t) \|^2 + \int^t_0 \| \xi_{xx}(\tau) \|^2 d\tau  \\[2mm]
  \lesssim & \| (\phi_{0x}, \psi_{0x}, \xi_{0x}) \|^2
  + ( \delta + N(t) + \eta ) \int^t_0 \| (\phi_x, \psi_{x}, \xi_x)(\tau) \|^2_1 dx d\tau  \\[2mm]
  & + \int^t_0 \int_{\mathbb{R}} ( |\Theta_{xx}| + |\Theta_x| )^2 ( \phi^2 + \xi^2 ) dx d\tau
  + \int^t_0 \int_{\mathbb{R}} ( | R_{1x} \psi_{x} | + | R_{2x} \xi_{x} |) dx d\tau\, ,
\end{aligned}
\end{eqnarray}
where $\eta>0$ is a constant suitably small, the last two terms on the right hand side of (\ref{3.36}) can be treated similarly as  (\ref{3.28}) and (\ref{3.29}), respectively. Then, with the help of the results of Lemma 3.1 and Lemma 3.2 we can deduce the estimate (\ref{3.33}), and this  completes the proof of Lemma 3.3.
\end{proof}

\begin{Lemma}
Under the assumption in proposition 3.2, we have
\begin{eqnarray}\label{3.37}
\begin{aligned}
  & \| (\phi_{xx}, \psi_{xx}, \xi_{xx})(t) \|^2 +
  \int^t_0 \| \xi_{xxx}(\tau) \|^2 d\tau \\[2mm]
  \lesssim & \| (\phi_{0xx}, \psi_{0xx}, \xi_{0xx}) \|^2 + \delta + ( \delta + N(t) ) \int^t_0 \| (\phi_x,\psi_x)(\tau) \|^2_1 d\tau\, .
\end{aligned}
\end{eqnarray}
\end{Lemma}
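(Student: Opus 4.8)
The plan is to run the energy scheme of Lemma 3.3 one derivative higher. First I would differentiate \eqref{3.3} twice in $x$ and multiply $(\ref{3.3})_{1xx}$, $(\ref{3.3})_{2xx}$, $(\ref{3.3})_{3xx}$ by $\frac{P}{v}\phi_{xx}$, $\psi_{xx}$, $\frac{\xi_{xx}}{\theta}$ respectively; summing and rewriting the time-derivative terms as in \eqref{3.34} produces an identity
\begin{equation*}
  \Big\{ \tfrac{P}{2v}\phi_{xx}^2 + \tfrac12\psi_{xx}^2 + \tfrac{R}{2(\gamma-1)\theta}\xi_{xx}^2 \Big\}_t + \tfrac{\kappa}{v\theta}\xi_{xxx}^2 + H_{3x} + J_3 = R_{1xx}\psi_{xx} + R_{2xx}\tfrac{\xi_{xx}}{\theta},
\end{equation*}
parallel to \eqref{3.34}, with $H_3$ the flux and $J_3$ the remainder. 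Integrating over $[0,t]\times\mathbb{R}$ and using $\frac{P}{v}\gtrsim1$, $\frac{R}{(\gamma-1)\theta}\gtrsim1$ gives the left-hand side of \eqref{3.37} together with $\int^t_0\|\xi_{xxx}\|^2 d\tau$, provided the flux integrates to zero and $J_3$ and the source are controlled.

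The decisive point, and the step I expect to be the main obstacle, is that at this order the only dissipative quantity is $\xi_{xxx}$: since $\mu=0$ there is no damping of $\phi_{xx}$ or $\psi_{xx}$, so the top-order coupling terms must cancel rather than be absorbed by Young's inequality against $\xi_{xxx}$. The dangerous terms are $\frac{R}{v}\xi_{xxx}\psi_{xx}$, coming from $(\frac{R\xi}{v})_{xxx}\psi_{xx}$ in the momentum equation, and $\frac{R}{v}\psi_{xxx}\xi_{xx}$, coming from $(p\psi_x)_{xx}\frac{\xi_{xx}}{\theta}$ in the energy equation. Because $p=R\theta/v$ gives $\frac{p}{\theta}=\frac{R}{v}$, their sum is $\frac{R}{v}(\xi_{xx}\psi_{xx})_x$; likewise $-\frac{P}{v}\phi_{xxx}\psi_{xx}$ from the momentum equation and $-\frac{P}{v}\phi_{xx}\psi_{xxx}$ from $(\ref{3.3})_{1xx}$ combine into $-\frac{P}{v}(\phi_{xx}\psi_{xx})_x$. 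The perfect-derivative parts $(\frac{R}{v}\xi_{xx}\psi_{xx})_x$ and $-(\frac{P}{v}\phi_{xx}\psi_{xx})_x$ go into $H_{3x}$ and vanish after integration in $x$, while the leftover factors $(\frac{R}{v})_x$, $(\frac{P}{v})_x$ carry a small derivative and stay in $J_3$. Isolating this cancellation, and verifying that every surviving top-order term in $J_3$ carries either a genuinely small coefficient or a background-wave derivative, is the crux.

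With the cancellation in hand, $J_3$ has the structure of $J_2$ in \eqref{3.35}: one part bounded by $O(1)(N(t)+\delta+\eta)\,|(\phi_{xx},\psi_{xx},\xi_{xx},\xi_{xxx})|^2$ with $\eta$ fixed small, and one part bounded by products of background-wave derivatives with $(\phi^2+\phi_x^2+\xi^2+\xi_x^2)$. In the first part the $\xi_{xxx}^2$ contribution is absorbed into the dissipation on the left by taking $\delta,N(t),\eta$ small, whereas the $|(\phi_{xx},\psi_{xx})|^2$ contribution is thrown onto the right as the term $(\delta+N(t))\int^t_0\|(\phi_x,\psi_x)\|_1^2 d\tau$. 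For the second part I would bound the background derivatives through Lemma 2.1 and \eqref{2.6} (arranging the integrations by parts so that no derivative of the profiles higher than second order is needed, or deriving the required profile bounds directly from \eqref{2.5} and \eqref{2.16}), and estimate the time integrals exactly as in \eqref{3.28} using $\|(\Theta^r_{1x},\Theta^r_{3x})\|_\infty\lesssim\delta^{1/8}(1+t)^{-7/8}$ and the Gaussian bound for the contact wave; this yields $\lesssim\delta+\delta\int^t_0\|(\phi_x,\xi_x)\|^2 d\tau$.

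Finally the source terms $\int^t_0\!\int_{\mathbb{R}}(R_{1xx}\psi_{xx}+R_{2xx}\tfrac{\xi_{xx}}{\theta})\,dxd\tau$ are handled as in \eqref{3.28} and \eqref{3.29} from the bounds \eqref{3.17} on $(R_{1xx},R_{2xx})$, giving $\lesssim\delta+\delta\int^t_0\|(\psi_x,\xi_x)\|^2 d\tau$. Assembling everything and invoking Lemmas 3.1, 3.2 and 3.3 to absorb the accumulated lower-order dissipation integrals $\int^t_0\|\xi_x\|^2$, $\int^t_0\|\xi_{xx}\|^2$ and the weighted terms into $\|(\phi_0,\psi_0,\xi_0)\|_2^2+\delta+(\delta+N(t))\int^t_0\|(\phi_x,\psi_x)\|_1^2 d\tau$ closes \eqref{3.37}.
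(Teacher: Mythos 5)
Your overall scheme coincides with the paper's: the same multipliers $\frac{P}{v}\phi_{xx}$, $\psi_{xx}$, $\frac{\xi_{xx}}{\theta}$, the same energy identity \eqref{3.38} with flux $H_3$ and remainder $J_3$, the same treatment of the source terms via \eqref{3.17} and of the background-wave quadratic terms via Lemma 2.1, Lemma 3.2 and the analogue of \eqref{3.28}. The two principal hyperbolic cancellations you isolate, namely $\frac{R}{v}\big(\xi_{xx}\psi_{xx}\big)_x$ and $-\frac{P}{v}\big(\phi_{xx}\psi_{xx}\big)_x$, are exactly the ones the paper exploits.

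There is, however, a genuine gap at precisely the point you yourself call the crux. After those two cancellations, the expansion of $(p-P)_{xx}\psi_{xxx}$ (equivalently, in your organization, of $(p-P)_{xxx}\psi_{xx}$) still leaves the terms the paper denotes $J^2_3=\psi_{xxx}\big((\tfrac{P}{v})_{xx}\phi-(\tfrac{R}{v})_{xx}\xi\big)$. Since $(\tfrac{P}{v})_{xx}$ and $(\tfrac{R}{v})_{xx}$ contain $-\tfrac{P}{v^2}v_{xx}=-\tfrac{P}{v^2}(V_{xx}+\phi_{xx})$, these terms contain $\tfrac{R\xi-P\phi}{v^2}\,\phi_{xx}\,\psi_{xxx}$: a product of the two top-order quantities $\phi_{xx}$ and $\psi_{xxx}$, neither of which carries any dissipation here ($\mu=0$), and $\psi_{xxx}$ is not even controlled by the $H^2$ solution space. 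The $O(N(t))$ coefficient does not rescue this: your asserted bound $O(1)(N(t)+\delta+\eta)\,|(\phi_{xx},\psi_{xx},\xi_{xx},\xi_{xxx})|^2$ simply does not accommodate a factor of $\psi_{xxx}$ (or, after an integration by parts in $x$, of $\phi_{xxx}$), and Young's inequality against the only available dissipation $\xi_{xxx}$ is of no use. The missing idea is the paper's computation \eqref{3.41}: use the mass equation to replace $\psi_{xxx}$ by $\phi_{txx}$, so that $-\tfrac{P\phi}{v^2}\phi_{xx}\phi_{txx}=-\big\{\tfrac{P\phi}{v^2}\tfrac{\phi_{xx}^2}{2}\big\}_t+\big(\tfrac{P\phi}{v^2}\big)_t\tfrac{\phi_{xx}^2}{2}$; the first piece is an $O(N(t))$-small perturbation of the energy functional (harmless once $N(t)$ is small), and the second is genuinely $O(N(t))\phi_{xx}^2$. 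The same device handles $\psi_{xxx}(\tfrac{R}{v})_{xx}\xi$. Without this step the estimate \eqref{3.37} does not close as you describe.
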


\begin{proof}

Multiplying $(\ref{3.3})_{1xx}$ by $\frac{P}{v}\phi_{xx}$,
 $(\ref{3.3})_{2xx}$ by $\psi_{xx}$ and $(\ref{3.3})_{3xx}$ by $\frac{\xi_{xx}}{\theta}$, respectively,
and adding the results together, it is easily to get
 \begin{eqnarray}\label{3.38}
 \begin{aligned}
  & \bigg\{ \frac{P}{2v} \phi_{xx}^2 + \frac{\psi^2_{xx}}{2} + \frac{R \xi^2_{xx}}{2(\gamma-1) \theta}
  \bigg\}_t + \frac{\kappa}{v\theta} \xi^2_{xxx} + H_{3x} + J_3 \\[2mm]
  = & R_{1xx} \psi_{xx} + R_{1xx} \frac{\xi_{xx}}{v}\, ,
\end{aligned}
\end{eqnarray}
where
\begin{eqnarray}\label{3.39}
\begin{aligned}
  H_3 = & (p-P)_{xx} \psi_{xx} + \frac{\xi_{xx}}{\theta} \bigg( (p-P)U_x - \Big( \frac{\kappa\xi_x}{v} - \frac{\kappa \Theta_x \phi}{vV} \Big)_x \bigg)_{x}, \\[2mm]
  J_3 = & \Big( \frac{\xi_{xx}}{\theta} \Big)_x
  \bigg( \Big( \frac{\kappa \xi_x}{v} - \frac{\kappa \Theta_x \phi}{vV} \Big)_x - (p-P) U_x \bigg)_x
  - \frac{\kappa}{v \theta} \xi^2_{xxx} \\[2mm]
  & - \Big( \frac{P}{2v} \Big)_t \phi_{xx}^2 - \Big( \frac{R}{2 (\gamma-1) \theta} \Big)_t \xi^2_{xx} + (2 p_x \psi_{xx} + p_{xx} \psi_x ) \frac{\xi_{xx}}{\theta} \\[2mm]
  & + 3 \bigg\{ \Big( \frac{R}{v} \Big)_x \xi_x \bigg\}_x \psi_{xx} - 3 \bigg\{ \Big( \frac{P}{v} \Big)_x \phi_x \bigg\}_x \psi_{xx} + 2 J^1_3 + J^2_3\, .
\end{aligned}
\end{eqnarray}
Here $J^1_3$, $J^2_3$ are the following equalities
\begin{eqnarray*}
\begin{aligned}
  J^1_3 :&= \psi_{xxx} \bigg( \Big( \frac{P}{v} \Big)_x \phi_x - \Big( \frac{R}{v} \Big)_x \xi_x \bigg) \, , \\[2mm]
  J^2_3 :&= \psi_{xxx} \bigg( \Big( \frac{P}{v} \Big)_{xx} \phi - \Big( \frac{R}{v} \Big)_{xx} \xi \bigg) \, .
\end{aligned}
\end{eqnarray*}
Meanwhile, we can get
\begin{eqnarray}\label{3.40}
\begin{aligned}
  J^1_3 = & \bigg\{ \psi_{xx} \bigg( \Big( \frac{P}{v} \Big)_x \phi_x - \Big( \frac{R}{v} \Big)_x \xi_x \bigg) \bigg\}_x
  - \psi_{xx} \bigg( \Big( \frac{P}{v} \Big)_x \phi_x - \Big( \frac{R}{v} \Big)_x \xi_x \bigg)_x \\[2mm]
  = & \bigg\{ \psi_{xx} \bigg( \Big( \frac{P}{v} \Big)_x \phi_x - \Big( \frac{R}{v} \Big)_x \xi_x \bigg) \bigg\}_x
  + O(1) ( N(t) + \delta ) | ( \phi_x, \xi_x, \phi_{xx}, \psi_{xx}, \xi_{xx} ) |^2
\end{aligned}
\end{eqnarray}
and
\begin{eqnarray}\label{3.41}
\begin{aligned}
  & \psi_{xxx} \Big( \frac{P}{v} \Big)_{xx} \phi \\[2mm]
  = & \psi_{xxx} \phi \bigg( \frac{P_{xx}}{v} + 2 P_{x} \Big( \frac{1}{v} \Big)_x + P \Big( \frac{2v^2_x}{v^3} - \frac{V_{xx}}{v^2} \Big) \bigg) - \frac{P\phi}{v^2} \phi_{xx} \psi_{xxx} \\[2mm]
  = & - \frac{P\phi}{v^2} \phi_{xx} \phi_{txx} + \bigg\{ \psi_{xx} \phi \bigg( \frac{P_{xx}}{v} + 2 P_{x} \Big( \frac{1}{v} \Big)_x + P \Big( \frac{2v^2_x}{v^3} - \frac{V_{xx}}{v^2} \Big) \bigg) \bigg\}_x \\[2mm]
  & - \psi_{xx} \bigg\{ \phi \bigg( \frac{P_{xx}}{v} + 2 P_{x} \Big( \frac{1}{v} \Big)_x + P \Big( \frac{2v^2_x}{v^3} - \frac{V_{xx}}{v^2} \Big) \bigg) \bigg\}_x \\[2mm]
  = & - \bigg\{ \frac{P\phi}{v^2} \frac{\phi^2_{xx}}{2} \bigg\}_t + \bigg\{ \psi_{xx} \phi \bigg( \frac{P_{xx}}{v} + 2 P_{x} \Big( \frac{1}{v} \Big)_x + P \Big( \frac{2v^2_x}{v^3} - \frac{V_{xx}}{v^2} \Big) \bigg) \bigg\}_x \\[2mm]
  & + O(1) ( N(t) + \delta ) | ( \phi_x, \xi_x, \phi_{xx}, \psi_{xx}, \xi_{xx} ) |^2 + | ( \Theta_x, \Theta_{xx} ) |^2 | ( \phi, \xi ) |^2 \, .
\end{aligned}
\end{eqnarray}
Similar to the estimate of $\psi_{xxx} \Big( \frac{P}{v} \Big)_{xx} \phi$, we have
\begin{eqnarray*}
\begin{aligned}
  \psi_{xxx} \Big( \frac{R}{v} \Big)_{xx} \xi
  = & \bigg\{ \frac{R \xi}{v^2} \frac{\phi^2_{xx}}{2} \bigg\}_t
  + \bigg\{ R \psi_{xx} \xi \Big( \frac{V_{xx}}{v^2} - \frac{2v^2_x}{v^3} \Big) \bigg\}_x + | ( \Theta_x, \Theta_{xx} ) |^2 | ( \phi, \xi ) |^2 \\[2mm]
  & + O(1) ( N(t) + \delta ) | ( \phi_x, \xi_x, \phi_{xx}, \psi_{xx}, \xi_{xx} ) |^2 \, .
\end{aligned}
\end{eqnarray*}
Therefore, it holds
\begin{eqnarray}\label{3.42}
\begin{aligned}
  J_3 = & \Big\{ \frac{R \xi}{v^2} \frac{\phi^2_{xx}}{2} - \frac{P\phi}{v^2} \frac{\phi^2_{xx}}{2} \Big\}_t
  + \bigg\{ \psi_{xx} \phi \bigg( \frac{P_{xx}}{v} + 2 P_{x} \Big( \frac{1}{v} \Big)_x + P \Big( \frac{2v^2_x}{v^3} - \frac{V_{xx}}{v^2} \Big) \bigg) \bigg\}_x \\[2mm]
  & + \bigg\{ \psi_{xx} R \xi \Big( \frac{V_{xx}}{v^2} - \frac{2v^2_x}{v^3} \Big) \bigg\}_x + \bigg\{ \psi_{xx} \bigg( \Big( \frac{P}{v} \Big)_x \phi_x - \Big( \frac{R}{v} \Big)_x \xi_x \bigg) \bigg\}_x \\[2mm]
  & + O(1) ( N(t) + \delta ) | ( \phi_x, \xi_x, \phi_{xx}, \psi_{xx}, \xi_{xx} ) |^2 + | ( \Theta_x, \Theta_{xx} ) |^2 | (\phi, \xi) |^2 \, .
\end{aligned}
\end{eqnarray}
After integrating (\ref{3.38}) on $[0, t] \times \mathbb{R}$, we get
\begin{eqnarray}\label{3.43}
\begin{aligned}
  & \| ( \phi_{xx}, \psi_{xx}, \xi_{xx} )(t) \|^2 + \int^t_0 \| \xi_{xxx}(\tau) \|^2 d\tau \\[2mm]
  \lesssim & \| ( \phi_{0}, \psi_{0}, \xi_{0} ) \|^2_2 + ( \delta + N(t) ) \int^t_0 \| ( \phi_x, \psi_x, \xi_x )(\tau) \|^2_1 d\tau \\[2mm]
  & + \int^t_0 \int_{\mathbb{R}} ( | \Theta_{xx} | + | \Theta_x | )^2 | ( \phi, \xi ) |^2 dx d\tau \\[2mm]
  & + \int^t_0 \int_{\mathbb{R}} | R_{1xx} \psi_{xx} + R_{2xx} \xi_{xx} | dx d\tau \, .
\end{aligned}
\end{eqnarray}
For the estimate of the last term in (\ref{3.43}), we have
\begin{eqnarray}\label{3.44}
\begin{aligned}
  & \int^t_0 \int_{\mathbb{R}} | R_{1xx} \psi_{xx} | dx d\tau \\[2mm]
  \lesssim & \delta \int^t_0 \big\| \psi_{xx} (\tau) \big\| \bigg\{ \Big( \int_{\mathbb{R}} e^{-2c|x|} e^{-2c\tau} dx \Big)^\frac{1}{2} + \Big( \frac{1}{(1+\tau)^3} e^{ \frac{-2cx^2}{1+\tau} } dx \Big) ^\frac{1}{2} \bigg\} d\tau \\[2mm]
  \lesssim & \delta \int^t_0 (1+t)^{-\frac{5}{4}} \| \psi_{xx} (\tau) \| d\tau
  \lesssim \delta \int^t_0 \| \psi_{xx}(\tau) \|^2 d\tau + \delta
\end{aligned}
\end{eqnarray}
and
\begin{eqnarray}\label{3.45}
  &&\int^t_0\int_{\bf R}|R_{2xx}\xi_{xx}|dxd\tau\nonumber\\[2mm]
  & \lesssim  & \delta^\frac{1}{8}\int^t_0\|\xi_{xx}\|_{\infty}(1+\tau)^{-\frac{7}{8}}d\tau \\[2mm]
  & \lesssim  & \delta^\frac{1}{8}\int^t_0\|\xi_{xx}\|^\frac{1}{2}\|\xi_{xxx}\|^\frac{1}{2}(1+\tau)^{-\frac{7}{8}}d\tau \nonumber\\[2mm]
  & \lesssim  & \delta^\frac{1}{8}\int^t_0 \|\xi_{xx}\|\|\xi_{xxx}\|d\tau + \delta^\frac{1}{8}\int^t_0(1+\tau)^{-\frac{7}{3}}d\tau
  \nonumber\\[2mm]
  &\lesssim & \delta^{\frac{1}{8}} \int^t_0 \|\xi_{xx}\|^2_1d\tau + \delta^{\frac{1}{8}} \,,\nonumber
\end{eqnarray}
then using the results of Lemma 3.1-Lemma 3.3, we can get (\ref{3.37}) and this  completes the proof of Lemma 3.4.
\end{proof}

Combining the results of Lemma 3.1-Lemma 3.4, we kown that
\begin{eqnarray}\label{3.46}
\begin{aligned}
  & \| ( \phi, \psi, \xi )(t) \|^2_2 + \int^t_0 \| \xi_x(\tau) \|^2_2 d\tau \\[2mm]
  \lesssim & \| ( \phi_0, \psi_0, \xi_0) \|^2_2 + \delta + ( \delta + N(t) + \eta ) \int^t_0 \| ( \phi_x, \psi_x ) (\tau) \|^2_1 d\tau \, .
\end{aligned}
\end{eqnarray}
Based on these analysis, we now deal with the last term on  the right hand in (\ref{3.46}).
\begin{Lemma}
Under the assumption in proposition 3.2, we have
\begin{eqnarray}\label{3.47}
  \int^t_0 \| ( \phi_x, \psi_x )(\tau) \|^2_1 d\tau
  \lesssim \| ( \phi_0, \psi_0, \xi_0 ) \|^2_2 + \delta \, .
\end{eqnarray}
\end{Lemma}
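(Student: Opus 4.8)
The plan is to recover the dissipation of the \emph{hyperbolic} components $\phi_x,\psi_x$ (and their second derivatives) which the equations do not supply directly: since $\mu=0$, only the temperature derivatives are dissipated, and indeed the combined estimate (\ref{3.46}) already furnishes $\int_0^t\|\xi_x(\tau)\|_2^2\,d\tau$ and $\|(\phi,\psi,\xi)(t)\|_2^2$ on its left-hand side, modulo the very quantity $\int_0^t\|(\phi_x,\psi_x)\|_1^2\,d\tau$ we now wish to bound. I would therefore close the argument through two coupled cross-term computations. First I would extract the dissipation of $\phi_x$ from the momentum equation $(\ref{3.3})_2$: multiplying $(\ref{3.3})_2$ by $-\phi_x$, integrating over $\mathbb{R}$, using $(\ref{3.3})_1$ in the form $\phi_{xt}=\psi_{xx}$ and $p-P=\tfrac{R\xi}{v}-\tfrac{P\phi}{v}$, the leading part of $-(p-P)_x\phi_x$ yields the good term $\tfrac{P}{v}\phi_x^2$, while the time term contributes $-\tfrac{d}{dt}\int_{\mathbb{R}}\psi\phi_x\,dx-\|\psi_x\|^2$. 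After integrating in time this gives
\begin{eqnarray*}
\int_0^t\|\phi_x\|^2\,d\tau \lesssim \Big|\int_{\mathbb{R}} \psi\phi_x\,dx\Big|_0^t + \int_0^t\|\psi_x\|^2\,d\tau + \int_0^t\|\xi_x\|^2\,d\tau + (\text{remainder}),
\end{eqnarray*}
where the remainder collects the interaction terms built from $R_1$ and from $(V_x,U_x,\Theta_x)$, estimated exactly as in (\ref{3.12})--(\ref{3.17}) and Lemma 2.1 and hence carrying a factor $\delta$ or $N(t)$.

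Next I would obtain the dissipation of $\psi_x$ from the energy equation $(\ref{3.3})_3$, solving it for $p\psi_x$ and testing against $\psi_x$. The only dangerous contribution is $\int_{\mathbb{R}}\xi_t\psi_x\,dx$, which I would rewrite as $\tfrac{d}{dt}\int_{\mathbb{R}}\xi\psi_x\,dx+\int_{\mathbb{R}}\xi_x\psi_t\,dx$ and then replace $\psi_t=-(p-P)_x+R_1$ from $(\ref{3.3})_2$, so that $\int_{\mathbb{R}}\xi_x\psi_t\,dx$ becomes controllable by $\|\xi_x\|^2$, $\|\xi_x\|\|\phi_x\|$ and $R_1$. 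The heat-conduction term contributes essentially $\tfrac{\kappa}{v}\xi_{xx}\psi_x$, absorbed by Young's inequality into a small multiple of $\|\psi_x\|^2$ plus $C\|\xi_{xx}\|^2$. This produces
\begin{eqnarray*}
\int_0^t\|\psi_x\|^2\,d\tau \lesssim \Big|\int_{\mathbb{R}} \xi\psi_x\,dx\Big|_0^t + \int_0^t\|\xi_x\|_1^2\,d\tau + \eta\int_0^t\|\phi_x\|^2\,d\tau + (\text{remainder}).
\end{eqnarray*}

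I would then combine the two displays: substituting the $\psi_x$ bound into the $\phi_x$ bound and choosing $\eta$ small lets me absorb the $\eta\int_0^t\|\phi_x\|^2\,d\tau$ term, leaving $\int_0^t(\|\phi_x\|^2+\|\psi_x\|^2)\,d\tau$ controlled by boundary terms, $\int_0^t\|\xi_x\|_1^2\,d\tau$, and remainders. Repeating the same two computations with one additional spatial derivative (test $(\ref{3.3})_{2x}$ by $-\phi_{xx}$ and $(\ref{3.3})_{3x}$ for $\psi_{xx}$) gives the analogous control of $\int_0^t(\|\phi_{xx}\|^2+\|\psi_{xx}\|^2)\,d\tau$, yielding the full $\int_0^t\|(\phi_x,\psi_x)\|_1^2\,d\tau$. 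Finally, the boundary terms are bounded by $\|(\phi,\psi,\xi)(t)\|_2^2$ and the right-hand side $\int_0^t\|\xi_x\|_2^2\,d\tau$ is controlled, both through (\ref{3.46}); since every coefficient multiplying $\int_0^t\|(\phi_x,\psi_x)\|_1^2\,d\tau$ that survives carries a factor $\delta+N(t)+\eta$, the smallness $\delta<\delta_2$, $N(t)\le\epsilon_2$ and small $\eta$ permits absorption of these terms into the left-hand side, which gives (\ref{3.47}).

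The hard part is the second step: with no velocity viscosity the dissipation of $\psi_x$ has no direct source, and the momentum cross-term alone only trades $\phi_x$ for $\psi_x$ and conversely, creating a circular dependence. Breaking this circularity relies on the identity $\int_{\mathbb{R}}\xi_t\psi_x = \tfrac{d}{dt}\int_{\mathbb{R}}\xi\psi_x + \int_{\mathbb{R}}\xi_x\psi_t$ combined with the momentum equation for $\psi_t$, which reroutes the velocity dissipation through the temperature dissipation $\|\xi_x\|_1$ already controlled by (\ref{3.46}); keeping the resulting cross coefficient small is exactly what closes the loop.
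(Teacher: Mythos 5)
Your argument is correct and is essentially the paper's own proof: the paper's identity (\ref{3.48}) is precisely your two multiplications (with weights $-\tfrac{P}{2}\phi_x$ for $(\ref{3.3})_2$ and $\psi_x$ for $(\ref{3.3})_3$) added into a single equation, so that the good terms $\tfrac{P^2}{2v}\phi_x^2$ and $\tfrac{P}{2}\psi_x^2$ appear simultaneously, with the dangerous cross term $-\tfrac{R}{\gamma-1}\xi_x\psi_t$ rerouted through the momentum equation exactly as you describe, and the boundary terms and $\int_0^t\|\xi_x\|_1^2\,d\tau$ controlled via (\ref{3.46}). The second-order bound (\ref{3.51}) is likewise obtained by repeating the computation on $(\ref{3.3})_{2x}$ and $(\ref{3.3})_{3x}$, so your sequential two-step presentation differs only cosmetically from the paper's combined identity.
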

\begin{proof}
 Multiplying $(\ref{3.3})_2$ by $-\frac{P}{2}\phi_x$, and $(\ref{3.3})_3$ by $\psi_x$, respectively, and adding all the resultant equations, we have
\begin{eqnarray}\label{3.48}
\begin{aligned}
  & \Big\{ \tfrac{R}{\gamma-1} \xi \psi_x - \frac{P}{2} \phi_x \psi \Big\}_t + \Big\{ \frac{P}{2} \phi_t \psi - \tfrac{R}{\gamma-1} \xi \psi_t \Big\}_x + \frac{P^2}{2v} \phi^2_x + \frac{P}{2} \psi^2_x \\[2mm]
  = & \frac{P_x}{2} \psi \psi_x - \frac{P_t}{2} \phi_x \psi + \frac{P}{2} \phi_x \bigg( \Big( \frac{R \xi}{v} \Big)_x - \Big( \frac{P}{v} \Big)_x \phi + R_1 \bigg) - \tfrac{R}{\gamma-1} \xi_x \psi_t \\[2mm]
  & + \kappa \Big( \frac{\xi_x}{v} - \frac{\Theta_x \phi}{v V} \Big)_x \psi_x - (p-P)( U_x + \psi_x ) \psi_x + R_2 \psi_x \, .
\end{aligned}
\end{eqnarray}
 Integrating (\ref{3.48}) on $[0, t] \times \mathbb{R}$ and using the inequality (\ref{3.46}), it holds that
\begin{eqnarray}\label{3.49}
\begin{aligned}
  & \int^t_0 \int_{\mathbb{R}} ( \phi^2_x + \psi^2_x ) dx d\tau \\[2mm]
  \lesssim & \| ( \phi, \psi, \xi ) \|^2_1 + \| ( \phi_0, \psi_0, \xi_0 ) \|^2_1
  + \int^t_0 \| \xi_{x}(\tau) \|^2_1 d\tau \\[2mm]
  & + \big( \tfrac{1}{4} + \delta + N(t) \big) \int^t_0 \int_{\mathbb{R}} ( \phi^2_x + \psi^2_x ) dx d\tau
  + \int^t_0 \int_{\mathbb{R}} ( | \Theta_{xx} | + | \Theta_x |)^2 ( \phi^2 + \psi^2 ) dx d\tau \\[2mm]
  & + \int^t_0 \int_{\mathbb{R}} | R_1 \phi_x + R_2 \psi_x | dx d\tau\, .
\end{aligned}
\end{eqnarray}
Similar as the estimates of (\ref{3.28}) and (\ref{3.29}), we can easily get
\begin{eqnarray}\label{3.50}
\begin{aligned}
  & \int^t_0 \int_{\mathbb{R}} { ( \phi^2_x + \psi^2_x ) } dx d\tau \\[2mm]
  \lesssim & \| ( \phi_0, \psi_0, \xi_0 ) \|^2_2 + \delta + ( \delta + N(t) ) \int^t_0 \| ( \phi_x, \psi_x ) (\tau) \|^2_1 d\tau \, .
\end{aligned}
\end{eqnarray}

Similarly, multiplying $(\ref{3.3})_{2x}$ by $-\frac{P}{2}\phi_{xx}$, $(\ref{3.3})_{3x}$ by $\psi_{xx}$, respectively, and integrating the result on $[0, t] \times \mathbb{R}$, then by using(\ref{3.46}), we can also get
\begin{eqnarray}\label{3.51}
\begin{aligned}
  & \int^t_0 \int_{\mathbb{R}} ( \phi^2_{xx} + \psi^2_{xx} ) dx d\tau \\[2mm]
  \lesssim & \| ( \phi_0, \psi_0, \xi_0 ) \|^2_2 + \delta + ( \delta + N(t) ) \int^t_0 \| ( \phi_x, \psi_x )(\tau) \|^2_1 d\tau \, .
\end{aligned}
\end{eqnarray}
Adding the results of (\ref{3.50}) with (\ref{3.51}), we can get (\ref{3.47}) and this  completes the proof of Lemma 3.5.
\end{proof}

Inserting  (\ref{3.47}) into (\ref{3.46}), then it yields (\ref{3.8}), that is the result of Proposition 3.2.\\

{\bf{ Acknowledgments}}: The authors are grateful to Professor A. Matsumura his support and advice. This work work was supported by the Fundamental Research Funds for the Central Universities and three grants from the National Natural Science Foundation of China under contracts 11301405, 11671309 and 11731008, respectively.

 \end{document}